 \patchcmd\Gread@eps{\@inputcheck#1 }{\@inputcheck"#1"\relax}{}{}
\newtheorem{prealg}{{\bf Algorithm}}
\newenvironment{algorithm1}{\begin{prealg}{\hspace{-0.5
em}{\bf.}}}{\end{prealg}}
\newtheorem{prethm}{{\bf Theorem}}
\newenvironment{theorem}{\begin{prethm}{\hspace{-0.5
em}{\bf.}}}{\end{prethm}}
\newtheorem{preex}{{\bf Example}}
\newtheorem{prepr}{{\bf Problem}}
\newenvironment{problem}{\begin{prepr}{\hspace{-0.5
em}{\bf.}}}{\end{prepr}}
\newtheorem{prelem}{{\bf Lemma}}
\newenvironment{lemma}{\begin{prelem}{\hspace{-0.5
em}{\bf.}}}{\end{prelem}}
\newtheorem{precor}{{\bf Corollary}}
\newenvironment{corollary}{\begin{precor}{\hspace{-0.5
em}{\bf.}}}{\end{precor}}
\newtheorem{prepos}{{\bf Proposition}}
\newtheorem{preobv}{{\bf Observation}}
\newtheorem{predef}{{\bf Definition}}
\newtheorem{preproof}{{\bf Proof.}}
\newenvironment{proof}[1]{\begin{preproof}{\rm
               #1}\hfill{$\rule{2mm}{2mm}$}}{\end{preproof}}
               \newtheorem{preprooft}{{\bf Proof of Theorem 1.}}
\begin{document}
\renewcommand{\baselinestretch}{1.3}
\title{{\Large\bf A Polynomial Time Algorithm to Find the Star Chromatic Index of Trees}}

{\small
\author{
{\sc Behnaz Omoomi$^a$},
{\sc Elham Roshanbin$^b$}, and
{\sc  Marzieh Vahid Dastjerdi$^a$}
\\ [1mm]
{\small \it $^a$ Department of Mathematical Sciences}\\
{\small \it  Isfahan University of Technology} \\
{\small \it 84156-83111, \ Isfahan, Iran}\\
{\small \it $^b$ Department of Mathematical Sciences}\\
{\small \it  Alzahra University} \\
{\small \it 19938-93973, \ Tehran, Iran}}
\maketitle
\begin{abstract}
\noindent A star edge coloring of a graph $G$ is a proper edge coloring of $G$ such that  every path and cycle of length four in $G$  uses at least three different colors. The star chromatic index of a graph $G$, is the smallest integer $k$ for which $G$ admits a star edge coloring with $k$ colors.  In this paper,  we present a polynomial time algorithm that finds an optimum star edge coloring for every tree. We also provide some tight bounds on the star chromatic index of trees with diameter at most four, and using these bounds we find a formula for the star chromatic index of certain families of trees. \\
\par
\noindent {\bf Keywords:} Star edge coloring, star chromatic index, trees.\\
\noindent {\bf 2010 MSC:} 05C15, 05C05.
\end{abstract}

\section{Introduction}
 A \textit{proper vertex} (\textit{edge coloring}) of a graph $G$ is an assignment of colors to the vertices (edges) of $G$ such that no two adjacent vertices (edges) receive  the same color. 
Under additional constraints on the proper vertex (edge) coloring of graphs, we get a variety of colorings such as the star vertex  and the star edge coloring. A \textit{star vertex coloring} of  $G$, is a proper vertex coloring such that no path or cycle  on four vertices in G is bi-colored (uses at most two colors) \rm{\cite{coleman,fertin}}.  
\par In 2008, Liu and Deng \cite{delta} introduced the edge version of the star vertex coloring that  is defined as follows. A \textit{star edge coloring}   of   $G$ is a proper edge coloring of $G$ such that  no path or cycle of length four (with four edges) in $G$ is bi-colored. We call a star edge coloring of $G$ with $k$ colors, a \textit{$k$-star edge coloring} of $G$. The smallest integer $k$   for which  $G$ admits a $k$-star edge coloring is called the \textit{star chromatic index}  of $G$ and is denoted by  $\chi^\prime_s(G)$.  Liu and Deng \cite{delta} presented  an upper bound on the star chromatic index of graphs with maximum  degree $\Delta\geq 7$. In \cite{mohar},  Dvo{\v{r}}{\'a}k et al.   obtained the  lower bound $2\Delta(1 + o(1))$ and  the near-linear upper bound $\Delta . 2^{O(1)\sqrt{\log\Delta}}$  on the star chromatic index of graphs with maximum degree $\Delta$.  They also presented some upper bounds  and lower bounds on the star chromatic index of  complete graphs  and  subcubic graphs (graphs with maximum degree at most 3). In \rm{\cite{class}}, Bezegov{\'a}  et al.   obtained some bounds on the star chromatic index of subcubic outerplanar graphs, trees and outerplanar graphs (see also \cite{Kerdjoudj, Lie,luzar, pradeep,wang}). 

\par In this paper, by a polynomial time algorithm, we  determine the  star chromatic index of  every tree. For this purpose, we first define a {\it Havel-Hakimi} type problem. The Havel-Hakimi problem, is a problem in which we are asked to determine whether or not there exists a simple graph with a given degree sequence (a sequence of the vertex degrees) \cite{havel}. In \cite{erdosh}, Erd{\"o}s et al. extended the Havel-Hakimi  problem to the problem of  existence of simple digraphs (there are no two edges with the same direction  between any two vertices, but loops are allowed)  possessing some prescribed bi-degree sequences (a sequence of the vertex outdegrees and indegrees).
 In the Havel-Hakimi type problem that we define in this paper, we determine whether it is possible to construct an {\it oriented graph} (a digraph that its underlying graph is simple) with a given outdegree sequence (a sequence of vertex outdegrees)  without caring about the indegrees. 
With a similar idea in \cite{erdosh, havel}, we present an algorithm that finds a solution for this problem in polynomial time. Then, we show  that
this Havel-Hakimi type problem is indeed polynomially equivalent to the problem of existence of a star edge coloring of a tree with  diameter at
most four (or a $2H$-tree for short), with specific number of colors.  Using this equivalency, we present a polynomial time algorithm that determines the star chromatic index of $2H$-trees by finding an optimum star edge coloring of them. We then give a polynomial time algorithm that extends the optimum star edge coloring
of $2H$-trees to an optimum star edge coloring of trees in general. 

\par This paper is organized as follows. 
In Section~\ref{pre}, we briefly introduce some graph theory   terminology and notations that we use in this paper.  In Section~\ref{digraph},  we define a Havel-Hakimi type problem, and we   give a greedy algorithm that finds a solution to this problem. 
   In Section \ref{2tree}, we give a polynomial time algorithm to determine the star chromatic index of every $2H$-tree. To do this, we first prove that the problem of  existence a star edge coloring for $2H$-trees is  polynomially equivalent with the  Havel-Hakimi type problem defined in Section~\ref{digraph}.   In Section~\ref{tree}, we show that  finding the star chromatic index of $2H$-trees leads to determining the star chromatic index of every tree. Moreover, we define a polynomial time algorithm  that  provides an optimum star edge coloring  for every tree.
 In Section~\ref{upper}, we present some tight bounds on the star chromatic index of $2H$-trees. Using these bounds we  find a formula for the star chromatic index of certain $2H$-trees and the caterpillars  (a \emph{caterpillar} is a tree for which  removing  the leaves produces a path).
\section{Preliminaries}\label{pre}
In this section, we present the terminology and notations that we use in this paper. 
 For a vertex $v$ of a graph $G$, we denote the degree of $v$ by $d_G(v)$.
In a digraph $G$,  the number of edges going into a vertex $v$, denoted by $d^-_G(v)$, is known as the {\it indegree} of $v$ and the number of edges  coming out of $v$, denoted by $d^+_G(v)$, is known as the {\it outdegree} of $v$. The set of   in-neighbours and out-neighbours  of $v$ are denoted by  $N_G^-(v)$ and $N^+_G(v)$, respectively.
When $G$ is clear from the context, we simply write $d(v)$,  $d^-(v)$, $d^+(v)$,  $N^-(v)$, and $N^+(v)$. For every vertex $u$ and $v$ of digraph $G$, by $\overrightarrow{uv}$, we mean a directed edge from $u$ to $v$. 
 For further information on graph theory concepts and terminology we refer the reader to \cite{bondy}.
\par 
A finite sequence  $d^+ = ((d_1^+,v_1),\ldots,(d_n^+,v_n))$ of ordered pairs $(d_i^+,v_i)$, $1\leq i \leq n$, in which $d_i^+$ is a non-negative integer and $v_i$ represents a vertex, is called an {\it   outdegree-vertex sequence} (or OVS for short). An OVS $d^+$ is called an {\it   outdegree-vertex graphical sequence} (or OVGS  for short) if there exists an   oriented graph $G$ with vertex set $\{v_1,  \ldots , v_n\}$, such that the outdegree of vertex $v_i$ is $d^+_i$, $1\leq i\leq n$. In this case, we say that $G$ {\it realizes} $d^+$, or $G$ is a {\it realization} of $d^+$. Note that the only condition on the  indegree sequence of $G$ is that $\sum_{i}{d^+_i}=\sum_{i}{d^-_i}$. For example, a realization of OVS $d^+=((2,v_1),(3,v_2),(3,v_3),(0,v_4),(0,v_5))$ is shown in Figure~\ref{fig1}. Hence, $d^+$ is an OVGS.
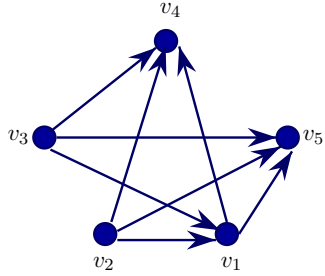
\begin{figure}[H]
\begin{center}
\psscalebox{0.8 0.8} 
{
\begin{pspicture}(0,-2.2)(5.28,2.2)
\definecolor{colour0}{rgb}{0.0,0.0,0.4}
\definecolor{colour1}{rgb}{0.0,0.0,0.6}
\pscircle[linecolor=colour0, linewidth=0.04, fillstyle=solid,fillcolor=colour1, dimen=outer](2.6,1.6){0.2}
\pscircle[linecolor=colour0, linewidth=0.04, fillstyle=solid,fillcolor=colour1, dimen=outer](4.6,0.0){0.2}
\pscircle[linecolor=colour0, linewidth=0.04, fillstyle=solid,fillcolor=colour1, dimen=outer](0.6,0.0){0.2}
\pscircle[linecolor=colour0, linewidth=0.04, fillstyle=solid,fillcolor=colour1, dimen=outer](1.6,-1.6){0.2}
\pscircle[linecolor=colour0, linewidth=0.04, fillstyle=solid,fillcolor=colour1, dimen=outer](3.6,-1.6){0.2}
\psline[linecolor=colour0, linewidth=0.04, arrowsize=0.12cm 5.0,arrowlength=0.5,arrowinset=2.88]{->}(0.7,0.1)(2.2,1.3)
\psline[linecolor=colour0, linewidth=0.04, arrowsize=0.12cm 5.0,arrowlength=0.5,arrowinset=3.0]{->}(0.8,0.0)(4.1,0.0)
\psline[linecolor=colour0, linewidth=0.04, arrowsize=0.12cm 5.0,arrowlength=0.5,arrowinset=3.0]{->}(0.7,-0.2)(3.2,-1.4)
\psline[linecolor=colour0, linewidth=0.04, arrowsize=0.12cm 5.0,arrowlength=0.5,arrowinset=3.0]{->}(1.7,-1.4)(2.5,1.2)
\psline[linecolor=colour0, linewidth=0.04, arrowsize=0.12cm 5.0,arrowlength=0.5,arrowinset=3.0]{->}(1.8,-1.55)(4.2,-0.3)
\psline[linecolor=colour0, linewidth=0.04, arrowsize=0.12cm 5.0,arrowlength=0.5,arrowinset=3.0]{->}(1.8,-1.7)(3.1,-1.7)
\psline[linecolor=colour0, linewidth=0.04, arrowsize=0.12cm 5.0,arrowlength=0.5,arrowinset=3.0]{->}(3.6,-1.4)(2.9,1.2)
\psline[linecolor=colour0, linewidth=0.04, arrowsize=0.12cm 5.0,arrowlength=0.5,arrowinset=3.0]{->}(3.8,-1.6)(4.5,-0.5)
\rput[bl](2.5,2.0){$v_4$}
\rput[bl](4.85,-0.1){$v_5$}
\rput[bl](3.5,-2.2){$v_1$}
\rput[bl](1.4,-2.2){$v_2$}
\rput[bl](0.0,-0.1){$v_3$}
\end{pspicture}
}
\caption{A realization of OVGS $d^+=((2,v_1),(3,v_2),(3,v_3),(0,v_4),(0,v_5))$.}
\label{fig1}
\end{center}
\end{figure}
\par 
 Let $X$ be a  finite sequence of objects.
 We denote  the length of $X$ by $|X|$ and the $i$-th element of $X$ by $X[i]$, $1\leq i\leq |X|$.  
 For simplicity, we denote the subsequence $X[i],X[i+1],\ldots, X[i+a]$ of $X$  by $X[i..i+a]$, where   $1\leq i\leq  |X|$ and $0\leq a\leq |X|-i$. 
 If each element of $X$ is an ordered pair, then we denote the $j$-th coordinate of the $i$-th element of $X$, by $X[i][j]$, where  $1\leq i\leq |X|$ and $j\in \{1,2\}$.
  For a subset  $A=\{X[i_1][2],\ldots,X[i_k][2]\}$  of  $X^2=\{X[i][2]:1\leq i\leq |X|\}$, we denote the vector $(i_1,\ldots,i_k)$ by $s(A)$, where $i_1\leq\cdots \leq i_k$.
   For example, if $X=((a,w),(b,x),(c,y),(d,z))$ and $A=\{w,y,z\}$, then  $s(A)=(1,3,4)$.
   
   We define a partial order ``$\preceq$''  among $k$-element finite sequences of positive integers as follows. We say $a\preceq b$ if  for each $1\leq i\leq k$, $a[i] \leq  b[i]$.
    Let $A$ and $B$ be two subsets of   $X^2$. We write
\[B \leq A~\text{if and only if}~s(B)\preceq s(A)\] 
and we say that $B$ is to the left of $A$.
For example, let $X=((a,u),(b,v),(c,w),(d,x),(e,y),(f,z))$, $A=\{v,x,z\}$, and $B=\{v,w,y\}$. Then, $s(A)=(2,4,6)$, $s(B)=(2,3,5)$, and $s(B)\preceq s(A)$. Thus, $B$ is to the left of $A$.
\par 
A {\it rooted tree} is a tree in which one vertex has been designated as the root. The {\it height} of a rooted tree $T$ is the number of edges on the longest  path between the root and a leaf.  The {\it level} of a vertex in $T$ is the distance between the vertex and the root plus one. Note that the level of the root is one. If $v$ is a vertex in level $\ell>1$ of  $T$ and $u$  is the parent of $v$ (its neighbour in level $\ell-1$), then we denote the set of  edges incident to $v$, except  $uv$, by $E_{v}$. 

If $T$  is a tree with diameter at most four, then we say that $T$ is a {\it $2H$-tree}. In other words, in $T$ there exists a vertex $u$  such that the height of $T$ with root $u$ is at most two.
Suppose that $d(u)=t$ and $u_1, \ldots , u_t$ are the neighbours of $u$.
For each $1\leq i\leq t$, let  $n_i$  be the size of  $E_{u_i}$ (i.e., $d(u_i)=n_i+1$),
 then we denote the $2H$-tree $T$ by $T_{n_1, \ldots, n_t}$, where  $n_1\leq n_2\leq \cdots\leq n_t$.
We call a $2H$-tree   in which all neighbours of the root
are of the same degree $r>0$, an  \emph{$r$-regular $2H$-tree} and denote by $T_{(r,t)}$.
\section{Realization of outdegree-vertex sequences}\label{digraph}
 In this section, we present a construction algorithm that determines whether a given outdegree-vertex sequence (or OVS) is an outdegree-vertex graphical sequence (or an OVGS) or not, and for an OVGS $d^+$ provides an oriented graph that  realizes $d^+$.

 We need the following notations and definitions to state the results of this section.
Suppose that $d^+=((d_1^+,v_1),\ldots, (d_n^+,v_n))$ is an OVS and $W$ is a proper subset of $V=\{v_1,\ldots,v_n\}$. Let $G_W$ be an  oriented graph on $V$ with the following outdegrees.
\[d^+_{G_W}(v_i)=\begin{cases}
d_i^+&~\text{if}~v_i\in W,\\
0&~\text{otherwise.}
\end{cases}
\]
 We say that $d^+$ is {\it normal} if $d^+_i\leq d^+_{i+1}$,  for every $1\leq i\leq n$. We also say that $d^+$ is {\it $G_W$-normal} if it has the following properties.
\begin{itemize}
\item For every $1\leq i\leq |W|$, $v_i\in W$.
\item For every $|W|< i\leq n$,  $d_i^++d^-_{G_W}(v_i)< d_{i+1}^++d^-_{G_W}(v_{i+1})$, or\\ $d_i^++d^-_{G_W}(v_i)= d_{i+1}^++d^-_{G_W}(v_{i+1})$ and $d^+_{i}\leq d^+_{i+1}$.
\end{itemize}

A {\it possible out-neighbour} (or PON for short) of $v_i\in V\setminus W$ is a subset of $V\setminus (N^-_{G_W}(v_i)\cup\{v_i\})$ with $d_i^+$ elements that is a candidate for being  the out-neighbours of $v_i$ in a realization of $d^+$.   
The {\it leftmost PON}  of $v_i$, denoted by $L_{{G_W}}(v_i)$, is  a PON of $v_i$ such that for every PON $A$ of $v_i$, $L_{G_W}(v_i)\leq A$. Indeed, $L_{G_W}(v)$ is the subset of $V\setminus (N^-_{G_W}(v_i)\cup\{v_i\})$  that contains $d^+_i$ vertices with the smallest subscripts.

\begin{theorem}\label{th1}
Let $d^+=((d_1^+,v_1),\ldots,(d_n^+,v_n))$ be an OVS  and $W$ be a proper subset of the vertex set $V=\{v_1,\ldots,v_n\}$. Suppose that for an oriented graph $G_W$, $d^+$ is $G_W$-normal and  for some $1\leq i\leq n$, there exists vertex $v_i\in V\setminus W$ with $d^+_i>0$. 
The OVS $d^+$ has realization $G$ in which for every $v\in W$, $N^+_G(v)=N^+_{G_W}(v)$ if and only if  $d^+$ has a realization $H$ in which for every $v\in W$, $N^+_H(v)=N^+_{G_W}(v)$ and $N^+_{H}(v_i)=L_{G_W}(v_i)$.
\end{theorem}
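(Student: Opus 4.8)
The plan is to prove the nontrivial implication by an exchange argument in the spirit of Havel--Hakimi, transforming an arbitrary realization into one whose out-neighbourhood at $v_i$ is exactly the leftmost PON, while never disturbing the edges leaving $W$. The backward direction is immediate: a realization $H$ witnessing the right-hand side already satisfies $N^+_H(v)=N^+_{G_W}(v)$ for every $v\in W$, so $H$ itself is a realization of the kind required on the left. For the forward direction, let $G$ be a realization with $N^+_G(v)=N^+_{G_W}(v)$ for all $v\in W$, and write $S=V\setminus(N^-_{G_W}(v_i)\cup\{v_i\})$, so that $L_{G_W}(v_i)$ consists of the $d^+_i$ smallest-subscript elements of $S$. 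Since the vertices of $N^-_{G_W}(v_i)$ point to $v_i$ in $G$ and $G$ is oriented, $N^+_G(v_i)\subseteq S$, i.e.\ $N^+_G(v_i)$ is a PON of $v_i$. I would measure progress by the positive integer potential $\Phi(G)=\sum_{v_k\in N^+_G(v_i)}k$, and show that whenever $N^+_G(v_i)\neq L_{G_W}(v_i)$ one can construct another realization $G'$ with the same restriction to $W$, the same outdegrees, and $\Phi(G')<\Phi(G)$. Because $L_{G_W}(v_i)$ is the unique leftmost PON, $\Phi$ attains its minimum over such realizations precisely when $N^+_G(v_i)=L_{G_W}(v_i)$, so finitely many improvements produce the desired $H$.

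To build $G'$, choose $v_a\in L_{G_W}(v_i)\setminus N^+_G(v_i)$ and $v_b\in N^+_G(v_i)\setminus L_{G_W}(v_i)$; as $L_{G_W}(v_i)$ gathers the smallest subscripts of $S$, every such pair satisfies $a<b$, so replacing $v_b$ by $v_a$ in $N^+_G(v_i)$ strictly lowers $\Phi$. If $\overrightarrow{v_av_i}\notin G$, I simply delete $\overrightarrow{v_iv_b}$ and add $\overrightarrow{v_iv_a}$: the result is oriented (since $\overrightarrow{v_iv_a}\notin G$ and now $\overrightarrow{v_av_i}\notin G'$), all outdegrees are unchanged, the edges out of $W$ are untouched (as $v_i\notin W$), and $\Phi$ drops. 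The delicate case is $\overrightarrow{v_av_i}\in G$. Here $v_a\notin W$, for otherwise $\overrightarrow{v_av_i}\in G_W$ would force $v_a\in N^-_{G_W}(v_i)$, contradicting $v_a\in S$; and since the $W$-vertices occupy the first $|W|$ positions while $N^-_{G_W}(v_i)\subseteq W$, the inequality $a<b$ together with $v_a\notin W$ also yields $v_b\notin W$. Thus both edges to be rerouted leave non-$W$ vertices, whose out-edges we are free to modify. If $v_a$ and $v_b$ are non-adjacent in $G$, the rotation deleting $\overrightarrow{v_av_i},\overrightarrow{v_iv_b}$ and adding $\overrightarrow{v_av_b},\overrightarrow{v_iv_a}$ preserves all outdegrees, stays oriented, and lowers $\Phi$; if $\overrightarrow{v_bv_a}\in G$, the directed triangle $v_a\to v_i\to v_b\to v_a$ can be reversed with the same effect (this uses $v_b\notin W$ to rewrite the edge out of $v_b$).

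The remaining subcase $\overrightarrow{v_av_b}\in G$ (still with $\overrightarrow{v_av_i}\in G$) is where I expect the main obstacle: neither the rotation nor the triangle reversal applies, because the edge $\overrightarrow{v_av_b}$ is already present. My intended remedy is to first free the blocking edge $\overrightarrow{v_av_i}$ by rerouting it to some vertex $v_c\neq v_a,v_i$ that is non-adjacent to $v_a$ in $G$ (delete $\overrightarrow{v_av_i}$, add $\overrightarrow{v_av_c}$), after which deleting $\overrightarrow{v_iv_b}$ and adding $\overrightarrow{v_iv_a}$ is legal and again lowers $\Phi$. The crux is therefore to guarantee that such a $v_c$ exists, and this is exactly the point where $G_W$-normality enters: the normal ordering of the non-$W$ vertices by $d^+_k+d^-_{G_W}(v_k)$ gives $d^+_a+d^-_{G_W}(v_a)\le d^+_b+d^-_{G_W}(v_b)$, and a degree count against $G$ should then locate a vertex not saturated by $v_a$ that can absorb the redirected edge, in the same manner that the auxiliary vertex is found in the classical Havel--Hakimi and Erd{\"o}s et al.\ arguments. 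Once the existence of $G'$ is secured in every case, the strict decrease of the integer potential $\Phi$ terminates the process and delivers the realization $H$, completing the proof.
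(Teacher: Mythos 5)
Your overall strategy is the same as the paper's: the backward direction is dismissed as trivial, and the forward direction is an exchange argument that replaces one wrong out-neighbour $v_b\in N^+_G(v_i)\setminus L_{G_W}(v_i)$ by one missing vertex $v_a\in L_{G_W}(v_i)\setminus N^+_G(v_i)$ at a time. Your observation that \emph{every} such pair satisfies $a<b$ (because $L_{G_W}(v_i)$ consists of the smallest-subscript elements of $S$) is correct and slightly cleaner than the paper's bijection $\Psi$ with $\phi(j)\le j$, and your potential $\Phi$ gives a tidy termination argument. Your deduction that $v_a\notin W$ and hence $v_b\notin W$ matches the paper, and your first three moves --- the simple swap when $\overrightarrow{v_av_i}\notin E(G)$, the rotation adding $\overrightarrow{v_av_b},\overrightarrow{v_iv_a}$ when $v_a,v_b$ are non-adjacent, and the triangle reversal when $\overrightarrow{v_bv_a}\in E(G)$ --- are exactly the paper's.

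The gap is in the last subcase, $\overrightarrow{v_av_i}\in E(G)$ and $\overrightarrow{v_av_b}\in E(G)$, and it is precisely the crux of the theorem. Your remedy conflates two situations. If $v_a$ has \emph{some} non-neighbour $v_c$ in $G$, then rerouting $\overrightarrow{v_av_i}$ to $\overrightarrow{v_av_c}$ works, but this needs no normality at all (any non-neighbour serves); this is the paper's fourth move verbatim. However, $v_a$ may be joined, in one direction or the other, to \emph{every} vertex of $V\setminus\{v_a\}$, and then no redirect target $v_c$ exists --- no ``degree count'' can locate a vertex that is not there, so the claimed consequence of $G_W$-normality is false in this saturated case. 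The paper uses the normality inequality $d^+_a+d^-_{G_W}(v_a)\le d^+_b+d^-_{G_W}(v_b)$ for a different conclusion here: since $v_a$ is adjacent to all other vertices, a count of the out-neighbours of $v_b$ against the in-neighbours of $v_a$ (with $v_i$ and $v_b$ already accounting for two out-neighbours of $v_a$, and two in-neighbours of $v_b$ in $V\setminus W$ being determined) yields a vertex $v_m\in V\setminus W$ with both $\overrightarrow{v_bv_m}\in E(G)$ and $\overrightarrow{v_mv_a}\in E(G)$, whereupon one reverses the directed four-cycle $v_a\to v_i\to v_b\to v_m\to v_a$. This reversal preserves all outdegrees and the out-neighbourhoods of $W$ (all of $v_a,v_b,v_m$ lie outside $W$), keeps the graph oriented, and still replaces $v_b$ by $v_a$ in $N^+(v_i)$, so it is compatible with your potential $\Phi$. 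Without this four-cycle move, or an equivalent argument for the saturated case, your exchange step fails exactly where the hypothesis of $G_W$-normality is genuinely needed.
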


\begin{proof}
{
Let $G$ be a realization of  $d^+$ and for every $v\in W$, $N^+_G(v)=N^+_{G_W}(v)$. If $N^+_G(v_i)=\nolinebreak L_{G_W}(v_i)$, then we are done. Otherwise, we will present a sequence of changes in the edges of $G$ that preserve the out-neighbours of every vertex in $W$ and convert $G$ into a graph $H$ in which  $N^+_{H}(v_i)=L_{G_W}(v_i)$.
There is an increasing bijective function $\phi :s(N^+_G(v_i)\setminus L_{G_W}(v_i))\rightarrow s(L_{G_W}(v_i)\setminus N^+_G(v_i))$ because of $|N^+_G(v_i)|=|L_{G_W}(v_i)|$.
Thus, the function  $\Psi :N^+_G(v_i)\setminus L_{G_W}(v_i)\rightarrow L_{G_W}(v_i)\setminus N^+_G(v_i)$, with $\Psi(v_j)=v_{\phi(j)}$, for every $v_j\in N^+_G(v_i)\setminus L_{G_W}(v_i)$,  is bijective such that  $\phi(j)\leq j$.
The last inequality holds, since $L_{G_W}(v_i)$ is the leftmost PON of~$v_i$.

Now suppose that $v_j\in N^+_G(v_i)\setminus L_{G_W}(v_i)$, $v_k\in L_{G_W}(v_i)\setminus N^+_G(v_i)$, and   $\Psi(v_j)=v_k$, where $\Psi$ is the function that we defined above.
 Let $A = (N^+_G(v_i) \setminus \{v_j\})\cup \{v_k\}$. We construct another realization $G^\prime$ of $d^+$ such that $N^+_{G^\prime}(v_i)=A$. 

Since $v_j\in N^+_G(v_i)$ and $v_k \not\in N^+_G(v_i)$, we have $\overrightarrow{v_iv_j}\in E(G)$ and $\overrightarrow{v_iv_k}\not\in E(G)$.  We have two possibilities: either  $\overrightarrow{v_kv_i}$ is an edge of $G$ or not. If $\overrightarrow{v_kv_i}\not\in E(G)$, then by adding edge $\overrightarrow{v_iv_k}$ and removing edge $\overrightarrow{v_iv_j}$,  we achieve the desired realization. Now, we suppose that $\overrightarrow{v_kv_i}\in E(G)$. Thus, we conclude that $v_k\in V\setminus W$, since  $v_k\in L_{G_W}(v_i)$. Also $v_j\in V\setminus W$, since $d^+$ is $G_W$-normal,  and $k<j$. We again have two possibilities: either $v_k$ and $v_j$ are connected by an edge or not.  If there is  no edge between $v_k$ and $v_j$, then we create the required graph by removing the edges $\overrightarrow{v_kv_i}$ and $\overrightarrow{v_iv_j}$ and adding the edges $\overrightarrow{v_iv_k}$ and $\overrightarrow{v_kv_j}$. Now assume that  one of the edges  $\overrightarrow{v_jv_k}$ or $\overrightarrow{v_kv_j}$ belongs to $E(G)$. If $\overrightarrow{v_jv_k}\in E(G)$, then we reverse the directions of the edges  $\overrightarrow{v_kv_i}$, $\overrightarrow{v_iv_j}$, and $\overrightarrow{v_jv_k}$. Thus, suppose that  $\overrightarrow{v_kv_j}\in E(G)$. If there exists  vertex $v_m$ such that there is no edge between $v_m$  and $v_k$, then we add edge $\overrightarrow{v_kv_m}$, reverse the direction of the edge $\overrightarrow{v_kv_i}$ and remove edge $\overrightarrow{v_iv_j}$. Otherwise, there exist an edge between $v_k$ and every vertex in $V\setminus\{v_k\}$. 
 Note that in this case, two out-neighbours of $v_k$ and two in-neighbours of $v_j$ in $V\setminus W$ are determined. Thus, because of $d^+_k+|N^-_{G_W}(v_k)|\leq d^+_j+|N^-_{G_W}(v_j)|$, there exists vertex $v_m\in V\setminus W$ such that edges $\overrightarrow{v_jv_m}$ and $\overrightarrow{v_mv_k}$ belong to $E(G)$. Therefore, it suffices  to  reverse the directions of the edges $\overrightarrow{v_kv_i}$, $\overrightarrow{v_iv_j}$, $\overrightarrow{v_jv_m}$, and $\overrightarrow{v_mv_k}$. Thus, in  all cases we obtain the required realization.
 
 We now apply this process for each $v_j \in N^+_G(v_i) \setminus L_{G_W}(v_i)$  to exchange $v_j$ with $\Psi(v_j)$ such that at every step in the obtained graph $\Psi(v_j)\in N^+(v_i)$.
 After the last step,  the final graph is $H$ and $L_{G_W}(v_i)=N^+_H(v_i)$, as desired. The converse of the statement is trivial.}
\end{proof}
Using Theorem \ref{th1}, we now present a construction algorithm that determines whether a given OVS is an OVGS or not, and in the  case that it is, it provides a  realization of it.

\begin{theorem}\label{th2}
For every  OVS $d^+$, there is a polynomial time algorithm that determines whether $d^+$ is an OVGS or not, and if so finds  a realization of it.
\end{theorem}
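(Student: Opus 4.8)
The plan is to turn Theorem~\ref{th1} into a greedy, Havel--Hakimi--style construction that processes the vertices one at a time, at each step fixing the out-neighbours of a single vertex and recording it in a growing set $W$. Throughout I maintain an oriented graph $G_W$ on $V$ whose only edges are the out-edges already committed to vertices of $W$, together with the loop invariant that the current sequence is $G_W$-normal and that, respecting the committed out-neighbourhoods, realizability of $d^+$ is equivalent to $d^+$ being an OVGS.

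First I would initialise $W=\emptyset$ and let $G_W$ be the edgeless oriented graph on $V$; here ``realization respecting $W$'' means simply ``realization,'' so the invariant reduces to the statement we must decide. The main loop then repeats the following. Re-normalize the sequence, i.e. put the vertices of $W$ in positions $1,\dots,|W|$ and sort the rest increasingly by $d_i^++d^-_{G_W}(v_i)$, breaking ties by $d_i^+$, so that $d^+$ is $G_W$-normal; this is a pure relabelling and alters neither $G_W$ nor realizability respecting $W$. If every vertex of $V\setminus W$ has outdegree $0$, halt and output $G_W$. Otherwise choose a vertex $v_i\in V\setminus W$ with $d_i^+>0$ (say, one of maximum outdegree). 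Test feasibility by checking whether $d_i^+\le |V\setminus(N^-_{G_W}(v_i)\cup\{v_i\})|$; if this fails, halt and report that $d^+$ is not an OVGS. If it holds, form $L_{G_W}(v_i)$, add the edges $\overrightarrow{v_iv}$ for $v\in L_{G_W}(v_i)$ to $G_W$, move $v_i$ into $W$, and iterate.

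For correctness I would argue both directions. If the loop outputs $G_W$, then since each committed set $L_{G_W}(v_i)$ avoids $v_i$ and its current in-neighbours, $G_W$ inductively stays an oriented graph (no loops, no oppositely directed pair), and its outdegree at each $v_i$ equals $d_i^+$; hence $d^+$ is an OVGS realized by $G_W$. For the failure direction, suppose the algorithm reports failure yet $d^+$ is an OVGS. An induction on the number of committed vertices, with the initialisation as base case and Theorem~\ref{th1} as the inductive step, shows that at the start of every iteration the current sequence still admits a realization respecting $W$: Theorem~\ref{th1} says exactly that such a realization exists iff one exists that additionally assigns $v_i$ its leftmost PON, which is what the algorithm commits. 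Thus at the failing iteration there is a realization respecting $W$, and in it $v_i$ has $d_i^+$ out-neighbours forming a PON, forcing $d_i^+\le |V\setminus(N^-_{G_W}(v_i)\cup\{v_i\})|$ and contradicting the feasibility test. Hence failure is reported only when $d^+$ is genuinely not an OVGS.

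Finally, each vertex of positive outdegree is committed to $W$ exactly once, so the loop runs at most $n$ times, and each iteration performs only a re-sort, an indegree update, and the selection of the $d_i^+$ smallest-index available vertices, all polynomial in $n$; the whole procedure is therefore polynomial. I expect the main obstacle to be not the construction but the bookkeeping that preserves the invariant: verifying that re-normalization after each commitment leaves the sequence $G_W$-normal so that Theorem~\ref{th1} applies at the next step, and pinning down the feasibility inequality so that ``no leftmost PON exists'' is provably equivalent to ``no realization respecting $W$ exists.'' Once these two points are secured, the remainder is the routine induction sketched above.
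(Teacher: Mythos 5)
Your proposal is correct and is essentially the paper's own proof: the paper's Algorithm~1 performs exactly this greedy loop (initial normalization, pre-placing zero-outdegree vertices in $W$, $G_W$-normalizing each iteration, testing $|A_i|\geq d^+_i$, committing the leftmost PON, and invoking Theorem~\ref{th1} inductively), and reaches the same conclusion with complexity $O(n^2\log n)$. The only cosmetic difference is that the paper processes the remaining vertices in the $G_W$-normal order rather than choosing one of maximum outdegree, but since Theorem~\ref{th1} applies to any $v_i\in V\setminus W$ with $d^+_i>0$ under $G_W$-normality, your variant is equally valid.
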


\begin{proof}
{
Let $d^+=((d_1^+,v_1),\ldots,(d_n^+,v_n))$ be an OVS. The following algorithm determines whether $d^+$ is an OVGS or not. Moreover, if $d^+$ is an OVGS, then the algorithm finds a realization of $d^+$ such that in each step, for every $1\leq i\leq n$ the set of out-neighbours of $v_i$ is its leftmost PON.
 \vspace*{5mm}
\hrule
\vspace*{-5mm}
\begin{algorithm1}\label{digraph1}
Recognition and realization of the given OVS $d^+=((d_1^+,v_1),\ldots,(d_n^+,v_n))$.
\vspace*{.7mm}
\hrule
\medskip
{\bf Step 1.} Normalize the given OVS $d^+$.

\medskip

{\bf Step 2.} Set  $i_0=1$ and $W=\emptyset$.

\medskip

{\bf Step 3.} While $i_0\leq n$ and $d^+[i_0][1]=0$, set $i_0=i_0+1$ and $W=W\cup\{d^+[i_0][2]\}$.

\medskip

{\bf Step 4.} Let  $G_W$ be a graph with no  edges on vertex set $\{v_1,\ldots,v_n\}$.

\medskip
{\bf Step 5.} For $i$ from $i_0$ to $n$  do the following steps.

\medskip

{\setlength\parindent{30pt}{\bf Step 5.1.} $G_W$-normalize $d^+$.

\medskip
%
%
%
%

{\bf Step 5.2.} Set $A_{i}=\{v_1,\ldots,v_n\}\setminus (N^-_{G_W}(d^+[i][2])\cup\{d^+[i][2]\})$.

\medskip

{\bf Step 5.3.} If $|A_{i}|< d^+[i][1]$, then print ``No" and stop.

\medskip

{\bf Step 5.4.} If $|A_{i}|\geq d^+[i][1]$, then call the set of $d^+[i][1]$ vertices in $A_i$ with the smallest\\
\hspace*{75pt} subscripts in $d^+$ as  $L_{G_W}(d^+[i][2])$.
\medskip

{\bf Step 5.5.} For every $v$ in $L_{G_W}(d^+[i][2])$, conncet $d^+[i][2]$ to $v$.

\medskip

{\bf Step 5.6.} Set $W=W\cup\{d^+[i][2]\}$.}

\medskip
 
{\bf Step 6.} Return the obtained oriented graph. 
\vspace*{1mm}
\hrule
\end{algorithm1}

In Step~1 of the algorithm, we first arrange the elements of $d^+$ such that for every $1\leq i\leq n$, $d^+[i][1]\leq d^+[i+1][1]$ (normalizing $d^+$).
In Step~2, we define variable $i_0$ with initial value one and empty set $W$. In Step~3, we increase $i_0$ to the smallest subscript for which vertex $d^+[i_0][2]$ has positive outdegree $d^+[i_0][1]$. Moreover,  we add every vertex with zero outdegree to $W$.
In Step~4, we consider  graph $G_W$ on vertex set $\{v_1,\ldots,v_n\}$  without any edges.  During the algorithm we extend $G_W$ to a realization of $d^+$ (if possible) by  adding some edges such that  the out-neighbours of vertices in $W$ are preserved. 
Namely, in Step~5,  for $i$ from $i_0$ to $n$, we  determine the out-neighbours of vertex $d^+[i][2]$, while the out-neighbours of all vertices with subscripts less than $i$  are already identified.
 In Step~5.1,  we rearrange $d^+$ such  that it is $G_W$-normal, if necessary. 
   
   In Step~5.2, we obtain the set of  allowed out-neighbours for $d^+[i][2]$,  and we denote this set by~$A_{i}$. Note that, by Theorem~\ref{th1} if $d^+$ is an OVGS, then there exists a realization $G$ of it such that, for every $1\leq j\leq i-1$,  $N^+_{G}(d^+[j][2])=N_{G_W}(d^+[j][2])$ and $N^+_{G}(d^+[i][2])=L_{G_W}(d^+[i][2])$. Hence, according to the size of $A_i$, we implement Step~5.3 and 5.4 as follows.
   In Step~5.3, if size of $A_{i}$ is less than $d^+[i][1]$, then we conclude that there is no realization for the given OVS $d^+$. Thus, the algorithm prints "No" and  stops the algorithm.
   Otherwise, in Step~5.4 we determine the elements of $L_{G_W}(d^+[i][2])$ as the  leftmost PON of $d^+[i][2]$. In Step~5.5, we connect  $d^+[i][2]$ to the vertices in $L_{G_W}(d^+[i][2])$. In Step~5.6, we add vertex $d^+[i][2]$ to $W$.
 Finally,    in Step~6, if $d^+$ is an OVGS, then the algorithm returns the realization  of $d^+$.
 
 We now prove that Algorithm~\ref{digraph1} is a polynomial time algorithm. In Step~1, 5.1 and 5.4, we have to use merge sorting and therefore these steps are of order $O(n\log n)$.  Step~3, 5.2 and 5.5 are single scans and therefore, the running time of  these steps is $O(n)$.   
 The running time of the other steps of the algorithm, is $O(1)$.  
 Since Step~5, runs at most $n$ times, then the time complexity of the algorithm is $O(n^2\log n)$.
 }\end{proof}
\section{Star chromatic index of $2H$-trees}\label{2tree}
In this section, using the results in Section~\ref{digraph1}, we give a polynomial time algorithm to find the star chromatic index of $2H$-trees. For this purpose, we first show the equivalency between the following two problems.
  
\begin{problem}\label{p1}\\
\textbf{Given}: a $2H$-tree, $T_{n_1, \ldots, n_t}$.\\
\textbf{Find}: minimum integer $k$ for which there is a star edge coloring of  $T_{n_1, \ldots, n_t}$ with $t+k$  colors.
\end{problem}

\begin{problem}\label{p2}\\
\textbf{Given:} an OVS $d^+=((n_1, v_1),\ldots, (n_t, v_t))$.\\
\textbf{Find:}  minimum integer $k$ for which OVS  $D^+_k=((0,v_{t+1}),\ldots,(0,v_{t+k}),(n_1,v_1),\ldots,(n_t,v_t))$ is  an OVGS.
\end{problem}

Note that, since the root of  $2H$-tree $T_{n_1,\ldots,n_t}$ is a vertex of degree $t$, we have $\chi^\prime_s(T_{n_1,\ldots,n_t})\geq t$.
    Hence, without loss of generality we can assume that $\chi^\prime_s(T_{n_1,\ldots,n_t})=t+k$, for some non-negative integer $k$.
     Thus, finding the star chromatic index of $T_{n_1,\ldots,n_t}$ is in fact equivalent to finding the minimum $k$ for which there is a $(t+k)$-star edge coloring of $T_{n_1,\ldots,n_t}$.
      In the following theorem, we prove that  finding the minimum desired $k$, is polynomially equivalent to finding the minimum $k$ for which  OVS $D^+_k=((0,v_{t+1}),\ldots,(0,v_{t+k}),(n_1,v_1),\ldots,(n_t,v_t))$ is  an OVGS.  
      
\begin{theorem}\label{p1p2}
Problem~\ref{p1} and Problem~\ref{p2} are polynomially equivalent.
\end{theorem}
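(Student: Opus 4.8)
The plan is to set up an explicit correspondence between the $(t+k)$-star edge colorings of $T_{n_1,\ldots,n_t}$ (Problem~\ref{p1}) and the realizations of $D^+_k$ (Problem~\ref{p2}), arranged so that the two feasibility conditions coincide for \emph{every} $k$; the polynomial equivalence (in fact, the equality of the optimal $k$) then follows at once.

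First I would normalize an arbitrary star edge coloring. As $u$ has degree $t$, the spokes $uu_1,\ldots,uu_t$ carry $t$ distinct colors, so after relabeling I may assume $uu_i$ has color $i$ and the remaining colors (at most $k$ of them) lie in $\{t+1,\ldots,t+k\}$. Since $T$ is a rooted tree of height at most two, I would then argue that the only paths of length four have the form $w_1u_iuu_jw_2$ with $i\neq j$ and $w_1,w_2$ leaves; such a path has successive edge colors $x,i,j,y$, where $x$ is the color of an edge of $E_{u_i}$ and $y$ that of an edge of $E_{u_j}$. Because the coloring is proper, $x\neq i$ and $y\neq j$, and I would conclude that this path is bicolored exactly when $x=j$ and $y=i$, that is, exactly when $E_{u_i}$ uses color $j$ while $E_{u_j}$ uses color $i$.

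Next I would read a coloring as an oriented graph $G$ on the ``color vertices'' $v_1,\ldots,v_{t+k}$ by declaring $\overrightarrow{v_iv_c}\in E(G)$ whenever $E_{u_i}$ uses color $c$; no loops arise since $c\neq i$. As $|E_{u_i}|=n_i$ and its edges receive distinct colors, $v_i$ acquires outdegree $n_i$, each $v_{t+\ell}$ has outdegree $0$, and the bicoloring criterion above says precisely that $\overrightarrow{v_iv_j}$ and $\overrightarrow{v_jv_i}$ never coexist, so $G$ is genuinely oriented; thus a $(t+k)$-star edge coloring yields a realization of $D^+_k$. For the converse, given a realization $G$ of $D^+_k$ I would color $uu_i$ with $i$ and, for each $i$, match the $n_i$ edges of $E_{u_i}$ bijectively with the $n_i=d^+_G(v_i)$ out-neighbours of $v_i$, assigning color $c$ to the edge paired with $v_c$. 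Properness at $u$ is clear; properness at $u_i$ holds because $v_i$ has no loop, so color $i$ is absent from $E_{u_i}$; and the star condition follows from the same criterion together with the fact that $G$, being oriented, contains no opposite pair $\overrightarrow{v_iv_j},\overrightarrow{v_jv_i}$. A point worth flagging is that two distinct branches may freely share a non-spoke color without creating a bicolored path, which is exactly why $D^+_k$ constrains only the outdegrees and leaves the indegrees (the sharing) free.

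Finally, both directions are relabelings computable in linear time, and the equivalence ``$T_{n_1,\ldots,n_t}$ admits a $(t+k)$-star edge coloring if and only if $D^+_k$ is an OVGS'' holds for each $k$; hence the least feasible $k$ is the same in Problem~\ref{p1} and Problem~\ref{p2}, establishing the polynomial equivalence. I expect the main obstacle to be the exact matching of the two combinatorial conditions: verifying that every length-four path passes through $u$, and that forbidding bicolored such paths is equivalent to forbidding a pair of opposite arcs between branch vertices while permitting arbitrarily many arcs into the outdegree-zero vertices that model the shared non-spoke colors.
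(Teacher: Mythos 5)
Your proposal is correct and takes essentially the same route as the paper's own proof: both directions use the identical color--vertex digraph (an arc $\overrightarrow{v_iv_c}$ whenever color $c$ appears in $E_{u_i}$), with the star condition translating exactly into the absence of opposite arc pairs and the outdegree-zero vertices modeling the freely shared colors $t+1,\ldots,t+k$. Your explicit classification of every length-four path as $w_1u_iuu_jw_2$ through the root is a small verification the paper leaves implicit, but it changes nothing in the argument.
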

\begin{proof}
{
  First assume that $c$  is a star edge coloring of $T_{n_1, \ldots,n_t}$, with color set $C=\{1, \ldots,t+k\}$. Let vertex $u$ be the root of $T_{n_1,\ldots,n_t}$ and  $u_1,\ldots,u_t$ be the neighbours of $u$.  Up to renaming colors, we can assume that $c(uu_i)=i$, for every $1\leq i\leq t$.  
We now  construct a digraph $G$ with the following vertex~set and edge set.
  \[V(G)=\{v_i: 1\leq i\leq t+k\},\quad(\text{corresponding to colors in}~C)\]
  \[ E(G)=\{ \overrightarrow{v_{i}v_j}:~i\neq j,~1\leq i\leq t,~1\leq j\leq k+t,~\text{and there is an edge with color}~j~\text{in}~E_{u_i}\}.\]
  
   Since $c$ is a proper edge coloring, for every       $1\leq i\leq t$, $c$ uses  $n_{i}$ (the size of $E_{u_i}$) different colors from $C\setminus\{i\}$ for coloring the edges in $E_{u_i}$.  Therefore, for every $1\leq i\leq t$,  $d^+(v_{i})=n_i$ and the rest of the vertices in $G$ have zero outdegree. Moreover,  there are no loops and no two edges with the same direction between any two vertices in $G$. Also, since $c$ is a star edge coloring, for every $i$ and $j$ in $\{1, \ldots,t\}$, if color $j$ appears in $E_{u_i}$, then color $i$ cannot appear in $E_{u_j}$, while each color in  $\{t+1,\ldots,t+k\}$ can be used in every $E_{u_i}$. Therefore, for every $i$ and $j$, where $i\neq j$ and $1\leq i,j\leq t+k$, $G$ contains at most one of the edges $\overrightarrow{v_iv_j}$  and $\overrightarrow{v_jv_i}$. Hence, $G$ is a realization of~$D^+_k$.
  \par
  Conversely, assume that  $G$ is a realization of  $D^+_k$. Then, for each    $1\leq i\leq t$, vertex $v_{i}$ has $n_i$ different out-neighbours. 
  Moreover, if for some $j\in C$, $\overrightarrow{v_{i}v_j}\in E(G)$, then $\overrightarrow{v_jv_{i}}\not\in E(G)$. 
 Now, we present an edge coloring $c$ for $T_{n_1,\ldots,n_t}$ as follows.
    For every     $1\leq i\leq t$,  we define  $c(uu_i)=i$ and color the edges of $E_{u_i}$ with different elements of  $\{j:  \overrightarrow{v_{i}v_j}\in E(G)\}$.
     This edge coloring is a star edge coloring of $T_{n_1,\ldots,n_t}$, because if there exists  a bi-colored path, say $xu_iuu_jy$, then $c(xu_i)=c(uu_j)=j$ and $c(yu_j)=c(uu_i)=i$. 
     Therefore, by definition of $c$,  both edges $\overrightarrow{v_{i}v_j}$ and $\overrightarrow{v_{j}v_i}$  must belong to  $E(G)$, which is a contradiction. Thus,  $c$ is a star edge coloring of $T_{n_1,\ldots,n_t}$.
It is easy to see that the above argument provides a polynomial time reduction from Problem~\ref{p1} to Problem~\ref{p2} and vice versa. 
}
\end{proof}

By proof of Theorem~\ref{p1p2},  given an OVGS $D^+_k=((0,v_{t+1}),\ldots,(0,v_{t+k}),(n_1,v_1),\ldots,(n_t,v_t))$ with realization $G$, we can find a star edge coloring of $T_{n_1,\ldots,n_t}$ (with root $u$) in which for every $1\leq i\leq t$, the color of $uu_i$ is $i$ and the color set of the edges in $E_{u_i}$  corresponds to $N^+_G(v_i)$. For example,  assume that $k=2$ and $D^+_2=((0,v_{4}),(0,v_{5}),(2,v_1),(3,v_2),(3,v_3))$ is an OVGS with realization $G$,  shown in Figure~\ref{fig1}. Then, $n_1=2$  and $n_2=n_3=3$. By proof of Theorem~\ref{p1p2}, if for every $1\leq i\leq 3$, we color edge $uu_i$ in $T_{2,3,3}$ with $i$, and color an edge in $E_{u_i}$ with color $j$, wherever $\overrightarrow{v_iv_j}$ is an edge in $G$, then the obtained coloring is a star edge coloring of $T_{2,3,3}$, as demonstrated in Figure~\ref{fig2}.  
\begin{figure}[!ht]
\begin{center}
\psscalebox{0.8 0.8} 
{
\begin{pspicture}(0,-1.7083334)(9.133333,1.7083334)
\definecolor{colour0}{rgb}{0.0,0.0,0.4}
\definecolor{colour1}{rgb}{0.0,0.0,0.6}
\pscircle[linecolor=colour0, linewidth=0.04, fillstyle=solid,fillcolor=colour1, dimen=outer](4.05,1.2916666){0.16666667}
\pscircle[linecolor=colour0, linewidth=0.04, fillstyle=solid,fillcolor=colour1, dimen=outer](7.133333,0.20833333){0.16666667}
\pscircle[linecolor=colour0, linewidth=0.04, fillstyle=solid,fillcolor=colour1, dimen=outer](4.05,-0.20833333){0.16666667}
\pscircle[linecolor=colour0, linewidth=0.04, fillstyle=solid,fillcolor=colour1, dimen=outer](1.3833333,0.20833333){0.16666667}
\pscircle[linecolor=colour0, linewidth=0.04, fillstyle=solid,fillcolor=colour1, dimen=outer](0.16666667,-0.675){0.16666667}
\pscircle[linecolor=colour0, linewidth=0.04, fillstyle=solid,fillcolor=colour1, dimen=outer](1.2666667,-1.2916666){0.16666667}
\pscircle[linecolor=colour0, linewidth=0.04, fillstyle=solid,fillcolor=colour1, dimen=outer](2.8,-1.275){0.16666667}
\pscircle[linecolor=colour0, linewidth=0.04, fillstyle=solid,fillcolor=colour1, dimen=outer](4.0666666,-1.5416666){0.16666667}
\pscircle[linecolor=colour0, linewidth=0.04, fillstyle=solid,fillcolor=colour1, dimen=outer](5.266667,-1.275){0.16666667}
\pscircle[linecolor=colour0, linewidth=0.04, fillstyle=solid,fillcolor=colour1, dimen=outer](8.266666,-0.975){0.16666667}
\pscircle[linecolor=colour0, linewidth=0.04, fillstyle=solid,fillcolor=colour1, dimen=outer](6.866667,-1.275){0.16666667}
\pscircle[linecolor=colour0, linewidth=0.04, fillstyle=solid,fillcolor=colour1, dimen=outer](8.966666,0.125){0.16666667}
\psline[linecolor=colour0, linewidth=0.04](3.8833334,1.2083334)(1.4666667,0.29166666)
\psline[linecolor=colour0, linewidth=0.04](1.3,0.125)(0.21666667,-0.625)
\psline[linecolor=colour0, linewidth=0.04](4.05,1.125)(4.05,-0.041666668)
\psline[linecolor=colour0, linewidth=0.04](3.9833333,-0.29166666)(2.9,-1.2083334)
\psline[linecolor=colour0, linewidth=0.04](4.05,-0.375)(4.05,-1.375)
\psline[linecolor=colour0, linewidth=0.04](4.133333,-0.29166666)(5.133333,-1.125)
\psline[linecolor=colour0, linewidth=0.04](7.133333,0.041666668)(6.883333,-1.125)
\psline[linecolor=colour0, linewidth=0.04](7.2166667,0.125)(8.133333,-0.875)
\psline[linecolor=colour0, linewidth=0.04](7.2166667,0.20833333)(8.8,0.125)
\psline[linecolor=colour0, linewidth=0.04](1.3833333,0.125)(1.3,-1.125)(1.3,-1.125)
\rput[bl](2.55,0.85833335){$1$}
\rput[bl](4.2,0.34166667){$2$}
\rput[bl](5.5,0.84166664){$3$}
\rput[bl](4.85,-0.7416667){$5$}
\rput[bl](8.05,0.25833333){$5$}
\rput[bl](0.35,-0.24166666){$4$}
\rput[bl](1.45,-0.7416667){$5$}
\rput[bl](3.05,-0.7416667){$1$}
\rput[bl](6.55,-0.7416667){$1$}
\rput[bl](7.35,-0.7416667){$4$}
\rput[bl](3.65,-1.1416667){$4$}
\psline[linecolor=colour0, linewidth=0.04](4.2166667,1.2083334)(6.9666667,0.29166666)
\rput[bl](3.95,1.5583333){$u$}
\rput[bl](1.15,0.45833334){$u_1$}
\rput[bl](3.45,-0.14166667){$u_2$}
\rput[bl](7,0.45833334){$u_3$}
\end{pspicture}
}
\caption{ A star edge coloring of $T_{2,3,3}$ corresponding to the graph shown in Figure~\ref{fig1}.}
\label{fig2}
\end{center}
\end{figure}
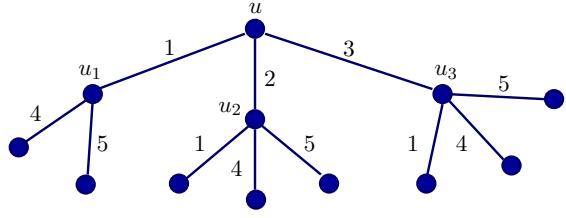

Now using Theorem~\ref{th1}, \ref{th2}, and \ref{p1p2}, we are ready to propose a polynomial time algorithm to solve Problem~\ref{p1}.
\begin{theorem}
There is a polynomial time algorithm for computing the star chromatic index of every $2H$-tree  and presenting an optimum star edge coloring of it.
\end{theorem}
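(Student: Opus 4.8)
The plan is to use the polynomial equivalence of Theorem~\ref{p1p2} to move the problem into the language of outdegree-vertex sequences, and then to extract the optimal number of colors by repeatedly invoking the recognition-and-realization procedure of Theorem~\ref{th2}. Concretely, given a $2H$-tree $T_{n_1,\ldots,n_t}$, I would first form the OVS $d^+=((n_1,v_1),\ldots,(n_t,v_t))$ of Problem~\ref{p2}. Since the root $u$ has degree $t$, we may write $\chi^\prime_s(T_{n_1,\ldots,n_t})=t+k^\ast$ for the least non-negative integer $k^\ast$ such that $D^+_{k^\ast}$ is an OVGS; by Theorem~\ref{p1p2} this $k^\ast$ is exactly the quantity asked for in Problem~\ref{p1}. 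Thus the whole task reduces to locating $k^\ast$ and producing a realization of $D^+_{k^\ast}$.

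The search for $k^\ast$ rests on two observations. First, the predicate ``$D^+_k$ is an OVGS'' is monotone in $k$: given a realization of $D^+_k$, adjoining one further isolated vertex $v_{t+k+1}$ (which has outdegree $0$) yields a realization of $D^+_{k+1}$. Second, $k^\ast$ admits an a priori bound linear in the input size; indeed, with $k=n_t$, directing each $v_i$ ($1\le i\le t$) to any $n_i\le n_t$ of the sink vertices $v_{t+1},\ldots,v_{t+n_t}$ produces an oriented graph with no reverse edges (the sinks have outdegree $0$), so $D^+_{n_t}$ is always an OVGS and hence $0\le k^\ast\le n_t$. With monotonicity and this bound secured, the values of $k$ for which $D^+_k$ is an OVGS form an up-set $\{k^\ast,k^\ast+1,\ldots\}$ inside $\{0,1,\ldots,n_t\}$, so I would binary search this range, calling Algorithm~\ref{digraph1} of Theorem~\ref{th2} to decide each instance $D^+_k$. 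This pins down $k^\ast$ with $O(\log n_t)$ recognition calls (a plain scan upward from $k=0$, stopping at the first OVGS, works equally well in $O(n_t)$ calls) and simultaneously returns a realization $G$ of $D^+_{k^\ast}$.

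Finally, I would convert $G$ into an optimum star edge coloring exactly as in the constructive direction of the proof of Theorem~\ref{p1p2}: set $c(uu_i)=i$ for $1\le i\le t$, and color the edges of $E_{u_i}$ with the colors indexed by $N^+_G(v_i)$. That proof already establishes that the resulting $c$ is a star edge coloring using precisely $t+k^\ast$ colors, which is optimal by the choice of $k^\ast$. For the running time, every vertex of $D^+_k$ carries an index at most $t+k\le t+n_t$, so each call to Algorithm~\ref{digraph1} costs $O\bigl((t+n_t)^2\log(t+n_t)\bigr)$ by Theorem~\ref{th2}; multiplying by the $O(\log n_t)$ search steps and adding the $O\bigl(\sum_i n_i\bigr)$ cost of reading off the coloring keeps the whole procedure polynomial in the size of $T_{n_1,\ldots,n_t}$.

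The step I expect to be the main obstacle is not any single hard lemma but the bookkeeping that guarantees a genuinely polynomial search: one must verify that the decision predicate is truly monotone in $k$ and that the bound $k^\ast\le n_t$ is correct, since a non-monotone predicate or only an exponential bound on $k^\ast$ would undermine the polynomial-time claim. Once these two facts are in place, the remainder is an assembly of Theorems~\ref{th1}--\ref{p1p2} with a standard monotone search, and no further combinatorial difficulty arises.
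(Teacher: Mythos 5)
Your proposal is correct and takes essentially the same approach as the paper: the paper's Algorithm~\ref{2H} likewise reduces to Problem~\ref{p2} via Theorem~\ref{p1p2}, finds the least $k$ for which $D^+_k$ is an OVGS by repeated calls to Algorithm~\ref{digraph1}, and reads the optimum coloring off the realization exactly as you describe. The only deviations are minor and sound: you binary search using an explicitly proved monotonicity together with the self-contained bound $k^\ast\leq n_t$ (obtained by directing each $v_i$ into $n_i$ sinks), whereas the paper simply scans $k=0,1,2,\ldots$ upward --- which needs no monotonicity to find the minimum --- and bounds the number of iterations by $O(\Delta)$ via the $\left\lfloor\frac{3\Delta}{2}\right\rfloor$ upper bound imported from \cite{class}.
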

\begin{proof}
{
In the following algorithm, we  present an optimum star edge coloring for the given $2H$-tree $T_{n_1,\ldots,n_t}$.
\vspace*{5mm}
\hrule
\vspace*{-4.5mm}
\begin{algorithm1}\label{2H}
An optimum star edge coloring of the given $T_{n_1,\ldots,n_t}$ with root $u$.
\vspace*{1.5pt}
\hrule
\medskip
{\bf Step 1.} Set $k=0$ and $D^+_k=((n_1,v_1),\ldots,(n_t,v_t))$.

\medskip

{\bf Step 2.} While Algorithm~\ref{digraph1} returns ``No" for the given OVS $D^+_k$, \\
{\hspace*{53pt} set $k=k+1$ and $D^+_k=((0,v_{t+k}),D^+_{k-1}[1..k+t-1])$.}

\medskip

%
{\bf Step 3.} Let $G$ be the realization of $D^+_k$ obtained in the last implement of  previous step. \\ 
\hspace*{53pt}For $i$ from 1 to $t$ do the following steps. 

\medskip

{\setlength\parindent{30pt} {\bf Step 3.1.} Color edge $uu_i$ of $T_{n_1,\ldots,n_t}$ with  $i$}. 

\medskip

{\setlength\parindent{30pt} {\bf Step 3.2.}  Color the edges of $E_{u_i}$ with different  subscripts of the vertices in $N^+_G(v_i)$.
}
\medskip

{\bf Step 4.} Return the value of $k$ and the obtained  edge coloring as a star edge coloring of $T_{n_1,\ldots,n_t}$\\
\hspace*{53pt}  with $t+k$ colors.
\vspace*{1mm}
\hrule
\end{algorithm1}

In Step 1 and 2 of Algorithm~\ref{2H},  we find the minimum $k$ for which $D^+_k$ is an OVGS.
  In Step~1 of Algorithm~\ref{2H},  we define OVS $D^+_k$ and  variable $k$ (with initial value zero) that  its final value in the algorithm  is the answer to Problem~\ref{p1}. The value of $k$, only changes if  in Step~2  Algorithm~\ref{digraph1} dose not return a realization of $D^+_k$. In such a case,  we increase $k$ by one, and we add the ordered pair $(0,v_{t+k})$ to OVS $D^+_{k-1}$.
 In Step~3, we take the realization $G$ of $D_k$ (obtained in the previous step for the final value of $k$), and   by Theorem~\ref{p1p2} we use the subscripts of the out-neighbours  of each vertex  in $G$ to define a star edge coloring for $T_{n_1,\ldots,n_t}$. 
In Step~4, Algorithm~\ref{2H} returns the final value of $k$ and an optimum star edge coloring of $T_{n_1,\ldots,n_t}$

Note that  only Step~2 and 3 of the algorithm require more than $O(1)$ computational operations. If $\Delta$ is the maximum degree of $T_{n_1,\ldots n_t}$, then its star chromatic index is at most $\lfloor\frac{3\Delta}{2}\rfloor$ (see Theorem~4 in \cite{class}). Therefore,    in Step~$2$, Algorithm~\ref{digraph1} runs at most $O(\Delta)$ times and the time complexity of this step  is $O(\Delta^3\log \Delta)$. Step~3 of Algorithm~\ref{2H} is transforming a solution of Problem~\ref{p2} to a solution of Problem~\ref{p1} and its time complexity is $O(\Delta)$.  Thus,   the time complexity of the algorithm is $O(\Delta^3\log \Delta)$.
 }
\end{proof}
\section{Star chromatic index of trees}\label{tree}
In this section, our goal is to find the star chromatic index of every tree and to present a polynomial time algorithm that  provides an optimum  star edge coloring of it. For this purpose,  we extend the result of Section~\ref{2tree} (for $2H$-trees) to every tree, as follows.

 Let $T$ be a  tree and  $v$ be a vertex of it.  The  induced subgraph of  $T$ on the vertices with  distance at most two from $v$ is a  $2H$-tree with root $v$, that is  denoted  by $T_v$. Clearly, 
  $$\chi^\prime_s(T)\geq \max\{\chi^\prime_s(T_v): v\in V(T)\}.$$
   In the following theorem, we show that in fact the equality holds for every tree.
\begin{theorem}\label{maxtr}
For every tree $T$, we have
\[\chi^\prime_s(T)=\max\{\chi^\prime_s(T_v): v\in V(T)\}.\] 
Moreover, there is a polynomial time algorithm to find a star edge coloring of $T$ with $\chi^\prime_s(T)$ colors.
\end{theorem}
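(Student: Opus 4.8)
The plan is to prove the nontrivial inequality $\chi^\prime_s(T)\le \max\{\chi^\prime_s(T_v):v\in V(T)\}=:k$, since the reverse inequality is immediate from each $T_v$ being a subgraph of $T$. The guiding observation is local-to-global: because $T$ is a tree, every path of length four has a \emph{unique} central vertex $v$ and lies entirely inside $T_v$. Hence a proper edge coloring of $T$ is a star edge coloring if and only if, for every vertex $v$, its restriction to $T_v$ is a star edge coloring; and within $T_v$ the only relevant length-four paths are those centered at $v$, so the constraint depends solely on the colors of the edges at $v$ together with the color-sets used on the edges one step farther out.

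First I would root $T$ arbitrarily and color its edges in BFS order, level by level, so that when a vertex $v$ is processed its upward edge $e$ to the parent $p$ and all edges incident to $p$ are already colored. Coloring the downward edges of $v$ is then exactly an instance of Problem~\ref{p2} for the $2H$-tree $T_v$: the color of $e$ and the set $S$ of colors already used on the remaining edges at $p$ play the role of a prescribed out-neighborhood of a single color, i.e.\ a fixed partial realization $G_W$ with $W$ the singleton formed by the color of $e$. I would invoke Theorem~\ref{th1} and Theorem~\ref{th2} to extend $G_W$ to a realization by the leftmost-PON greedy, and Theorem~\ref{p1p2} to read off the colors of the downward edges of $v$. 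The reason $k$ colors always suffice for this local step is that $\chi^\prime_s(T_v)\le k$ together with a color-permutation argument: since within $T_v$ only the paths centered at $v$ matter, and the fixed data enters merely as the disjoint pair $(\{c(e)\},S)$ of total size $d(p)$, any star edge coloring of $T_v$ with $k$ colors can be relabeled by a permutation of the palette to realize exactly this prescribed parent data.

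The step I expect to be the main obstacle is the consistency between successive levels. Enforcing the condition centered at $v$ does not determine the downward edges of the children outright; instead it \emph{forbids} a palette $F_w$ from the downward edges of each child $w$, namely the set of colors pointing to the color of $vw$ in the orientation produced at $v$ (its in-neighborhood there). When $w$ is processed later, its own local instance must be solved while avoiding $F_w$, and this must stay feasible within $k$ colors and in turn pass down only manageable palettes to the grandchildren. The crux is therefore to select, at each vertex, a realization whose indegrees keep these inherited palettes within budget; I would control this by maintaining an inductive invariant that the current partial coloring still admits a completion to a star edge coloring of $T$ with $k$ colors, and by using the canonical leftmost-PON realization of Theorem~\ref{th1} to make choices that preserve the invariant, so that each child inherits a forbidden palette small enough for its own degree.

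Finally, for the algorithmic claim I would observe that the procedure visits each of the $n$ vertices once and at each vertex solves a single outdegree-vertex sequence instance on $O(\Delta)$ colors via Algorithm~\ref{digraph1}; combined with the bound of Theorem~\ref{th2}, this yields a total running time polynomial in the size of $T$.
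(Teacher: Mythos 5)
You have correctly isolated the nontrivial inequality, and your architecture is essentially the paper's Algorithm~\ref{T}: sweep the rooted tree level by level, encode each local coloring task as a Problem~\ref{p2} instance with a partial realization $G_W$ recording the already-fixed colors, solve it greedily by leftmost PONs (Theorems~\ref{th1} and~\ref{th2}), and translate back through Theorem~\ref{p1p2}; the complexity count is also right. (One level slip: in the correspondence of Theorem~\ref{p1p2}, the downward edges of $v$ are the \emph{root} edges of $T_v$, whose colors are inputs to the instance, so the instance for $T_v$ with prescription $(\{c(e)\},S)$ colors the downward edges of the \emph{children} of $v$; the downward edges of $v$ itself get colored while processing $p$ --- this is Step~6.9 of Algorithm~\ref{T} with $u^\prime=v$.)

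However, there is a genuine gap exactly at what you yourself call the crux, and neither of the two devices you offer closes it. First, the palette-permutation argument is unsound because the fixed data of a local instance is not ``merely the disjoint pair $(\{c(e)\},S)$'': the colors $q_2,\ldots,q_t$ of the other edges at the root are also already fixed, the corresponding OVS vertices $v_{q_i}$ carry prescribed outdegrees $n_i$, and the overlap pattern between $\{q_2,\ldots,q_t\}$ and $S$ creates prescribed arcs $v_{c(e)}\rightarrow v_{q_i}$; a single relabeling of an arbitrary optimal coloring of $T_v$ cannot in general be made to match all of this data simultaneously. Second, the invariant ``the current partial coloring still admits a completion to a star edge coloring with $k$ colors'' is precisely the statement to be proved, and Theorem~\ref{th1} does not preserve it for you: it only says that \emph{if} some realization extends the current $G_W$, then one with a leftmost out-neighbourhood does; it gives no guarantee that the prescription inherited from earlier levels extends to any realization at all, let alone that the whole tree coloring remains completable. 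The paper supplies the missing mechanism concretely: Step~6.10.3 of Algorithm~\ref{T} always places the $k$ colors unused at $u^\prime$ on the \emph{last} edges of each $E_{f_i(u^\prime)}$, and the proof then verifies that under this ordering discipline the inherited prescription $\{v_c : c\in C(f_1(u^\prime))\setminus\{q_1\}\}$ is exactly the leftmost PON $L_{G_{W_0}}(v_{q_1})$ of the grandparent-color vertex; only then do Theorem~\ref{th1} together with $\chi^\prime_s(T_{u^\prime})\leq m$ (via Theorem~\ref{p1p2}) guarantee that the greedy completes each instance. Note also that the correct criterion is positional (the inherited set must be \emph{leftmost} after $G_W$-normalization), not the size bound (``a forbidden palette small enough for its own degree'') you propose; without this invariant and its verification, the induction does not close.
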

\begin{proof}
{
Let $m=\max\{\chi^\prime_s(T_v): v\in V(T)\}$. Since $\chi^\prime_s(T)\geq m$, to show the equality, it suffices to present a star edge coloring of $T$ with $m$ colors. 
To see that, in Algorithm~\ref{T}, we  present a star edge coloring of $T$ with  $m$ colors.
The main idea of this algorithm is that for every vertex $v$ of $T$, it defines an OVGS of length $m$ that  corresponds to $T_v$ and obtains a realization for it. Then, using the  arguments in Theorem~\ref{p1p2}, the algorithm colors the edges of $T_v$ with $m$ colors. 
 
We use the following assumptions and notations in Algorithm~\ref{T}. Let $T$ be a rooted tree with root $u$. 
We denote the neighbours of every vertex $v$ in $T$, by $f_1(v), \ldots , f_{d(v)}(v)$. 
  If $v\neq u$ is a vertex in level $\ell$, then we assume that $f_1(v)$ is the parent of $v$.
Moreover, we assume that   $d(f_2(v))\leq d(f_3(v))\leq\cdots\leq  d(f_{d(v)}(v))$. If $v=u$, then we also have $d(f_1(v))\leq d(f_2(v))$. For each vertex $v$ in $T$, by $C(v)$ we mean the  set of colors of the edges incident to $v$.
\vspace*{5mm}
\hrule
\vspace*{-5mm}
\begin{algorithm1}\label{T}
An optimum star edge coloring of the given tree $T$.
\vspace*{3.5pt}
\hrule
\medskip
{\bf Step 1.} For every vertex $v$ of $T$, run Algorithm~\ref{2H} to determine $\chi^\prime_s(T_v)$.

\medskip

{\bf Step 2.} Set $m=\max\{\chi^\prime_s(T_v): v\in T\}$, and $C=\{1,\ldots,m\}$.

\medskip

{\bf Step 3.} Consider an arbitrary vertex $u$ of $T$ as the root.

\medskip

{\bf Step 4.} For $i$ from $1$ to $d(u)$ color edge $uf_i(u)$ with $i$.

\medskip

{\bf Step 5.} Set  $\ell=2$.

\medskip

{\bf Step 6.} While there exist uncolored edges in $T$ do the following steps.

\medskip

{\setlength\parindent{30pt}{\bf Step 6.1.} If there is no uncolored edge  between vertices  in level  $\ell$ and $\ell+1$, then set 
\hspace*{80pt}  $\ell=\ell+1$..

\medskip

{\bf Step 6.2.} Choose a vertex $v$ in level $\ell$ that has uncolored incident edges.

\medskip

{\bf Step 6.3.} Set $u^\prime=f_1(v)$, $t=d(u^\prime)$, $C^\prime=C\setminus C(u^\prime)$, $k=|C^\prime|$.

\medskip

{\bf Step 6.4.} For $i$ from 1 to $t$, let $n_i=|E_{f_i(u^\prime)}|$ and $q_i$ be the color of edge $u^\prime f_{i}(u^\prime)$.

\medskip
%
%

{\bf Step 6.5.} Set $D^+_k=((0,v_{p_1}),\ldots,(0,v_{p_k}),(n_1,v_{q_1}),\ldots,(n_{t},v_{q_t}))$,  where $C^\prime=\{{p_1},\ldots,{p_k}\}$.

 \medskip

{\bf Step 6.6.} If $\ell=2$, then set $i_0=1$ and $W=\{ v_{p_1},\ldots,v_{p_k}\}$. \\
\hspace*{75pt} Otherwise,   set $i_0=2$ and $W=\{ v_{p_1},\ldots,v_{p_k},v_{q_1}\}$. 
 
 \medskip
 
 {\bf Step 6.7.} While $i_0\leq t$ and  $n_{i_0}=0$, set $i_0=i_0+1$ and $W=W\cup\{v_{q_{i_0}}\}$.
 
 \medskip
 
{\bf Step 6.8.} Let $G_W$ be the graph with no edges on vertex set $\{v_1,\ldots,v_m\}$. 
 
 \medskip
 
 {\bf Step 6.9.} If $\ell>2$, then add the edges $\{\overrightarrow{v_{q_1}v_c}:c\in C(f_1(u^\prime))\setminus\{q_1\}\}$ to $G_W$.
%
%
 
%
 
 \medskip
 
{\bf Step 6.10.} For $i$ from $i_0$ to $t$ do the following steps.}
 
 \medskip
 
  {\setlength\parindent{43pt}
  
  {\bf Step 6.10.1.} $G_W$-normalize $D^+_k$.
  
  \medskip
  
  {\bf Step 6.10.2.} Set $A_i=\{v_1,\ldots,v_m\}\setminus (N^-_{G_W}(v_{q_i})\cup \{v_{q_i}\})$, and add edges from $v_{q_i}$ to $n_i$ \\
  \hspace*{103pt} vertices in~$A_i$ with the smallest possible subscripts in $D^+_k$. 
  
  \medskip
  
  
%
  {\bf Step 6.10.3.}  Color the edges of $E_{f_i(u^\prime)}$ with different subscripts of vertices in  $N^+_{G_W}(v_{q_i})$\\
\hspace*{103pt}  such that the color set of the last $k$ edges of $E_{f_i(u^\prime)}$ is  $C^\prime$.
  \medskip
  
  {\bf Step 6.10.4.}  Set $W=W\cup\{v_{q_i}\}$.}
  
  \medskip
%

\medskip

{\bf Step 7.} Return the obtained edge coloring of $T$.

\vspace*{1mm}
\hrule
\end{algorithm1}

The performance of Algorithm~\ref{T} is as follows. In Step~1, for every vertex $v$ of $T$, we apply Algorithm~\ref{2H} to  determine $\chi^\prime_s(T_v)$.  In Step~2, we define $m=\max\{\chi^\prime_s(T_v):v\in T\}$ and  color set $C=\{1,\ldots,m\}$.
In Step~3, we consider an arbitrary vertex $u$ as the root of $T$. Then in Step~4, we color the edges incident to $u$. 
In the rest of the algorithm, we consider the set of edges incident to vertices in  levels $\ell$ and $\ell+1$ ($\ell\geq2$). If there is an uncolored edge in this set, we extend the current coloring to a coloring in which the edges in this set are colored as follows.
In Step~5, we define the variable $\ell$ with initial value 2  that indicates  the smallest integer for which there is an uncolored edge incident to the vertices in level $\ell$, during the algorithm.
 While the edge coloring of $T$ is not completed,  in every iteration of Step~6, if there is no uncolored  edge incident to the vertices in level $\ell$, then   we increase the value of $\ell$ one unit in Step~6.1.
 Otherwise,  in Step~6.2, we choose a vertex $v$ in level $\ell$ with uncolored incident edges. To color the edges incident to $v$, we consider $2H$-tree $T_{u^\prime}$, where $u^\prime$ is the parent of $v$ ($u^\prime=f_1(v)$).
 In Step~6.3~to~6.5, by Theorem~\ref{p1p2}, we define the OVGS $D^+_k=((0,v_{p_1}),\ldots,(0,v_{p_k}),(n_1,v_{q_1}),\ldots,(n_{t},v_{q_t}))$ of order $m$ that corresponds to  $T_v$.  
 Note that $C^\prime=\{p_1,\ldots,p_k\}$ is the set of colors that have been not used for coloring the edges incident to $u^\prime$.

In Step~6.6 to 6.10.2, with the similar arguments in Algorithm~\ref{digraph1}, we find a realization of $D^+_k$. More precisely,
in Step~6.6, we define variable $i_0$  that indicates the smallest index of the neighbours of $u^\prime$ with positive number of uncolored incident edges. The final value of $i_0$ is determined in Step~6.7. Moreover, in Step~6.6 and  6.7,  we define the subset  $W$ of $\{v_1,\ldots,v_m\}$ that contains the vertices with indices in $C^\prime\cup\{q_i:  |C(f_i(u^\prime))|=n_i+1\}$.  
In Step~6.8, we construct graph $G_W$ on vertex set $\{v_1,\ldots,v_m\}$.
 In Step~6.9, if the chosen vertex is in level $\ell>2$, then we add edges  from $v_{q_1}$ to the vertices with subscripts in $C(f_1(u^\prime))\setminus\{q_1\}$. 
In Step~6.10.1, we first rearrange the elements of $D^+_k$, if necessary, to make it $G_W$-normal. 
In Step~6.10.2,  for  every $q_i\in C(u^\prime)$ we determine the vertices of the leftmost PON of $v_{q_i}$ and add edges from $v_{q_i}$ to them. 

In Step~6.10.3, we  color the uncolored edges incident to $f_i(u^\prime)$  by the out-neighbours of $v_{q_i}$ and then in Step~6.10.4 we add $v_{q_i}$ to $W$. Note that we color the last edges in $E_{f_i(u^\prime)}$ with colors in $C^\prime$. After completing the edge coloring of $T_{u^\prime}$, we  repeat Step~6 of the algorithm, if needed. When all edges in $T$ are colored, Algorithm~\ref{T} returns a star edge coloring of $T$ in Step~7.

We now prove that Algorithm~\ref{T} provides an optimum star edge coloring of  $T$. In this algorithm, if $\ell=2$, then  the edge coloring of $T_u$ is obtained in the same way as  in Algorithm~\ref{2H}. 
Thus, assume that $v$  is a vertex in level $l>2$, $u^\prime=f_1(v)$, $f_2(u^\prime)=v$. 
Clearly, $T_{u^\prime}$   contains all colored edges within distance at most two from $v$. Therefore, if we color the uncolored edges incident to $v$  in such a way that  no bi-colored path of length four is created in  $T_{u^\prime}$, then we guarantee that up to this step there is no bi-colored path of length four in $T$.  
We show that   the algorithm provides a star edge coloring of $T_{u^\prime}$ with $m$ colors, as follows.
 We define  OVS ${D}^+_k=((0,v_{p_1}),\ldots,(0,v_{p_k}),(n_1,v_{q_1}),\ldots,(n_t,v_{q_t}))$  (corresponding to $T_{u^\prime}$ in Step~6.5).
Note that in $T_{u^\prime}$   only the edges incident to $u^\prime$ and $f_1(u^\prime)$ have  been colored.
 Let $W_0=\{v_{p_1},\ldots,v_{p_t}\}$, $G_{W_0}$ is the graph with no edges on vertex set $\{v_1,\ldots,v_m\}$, and   for every  $1\leq i\leq t$,         $W_i=W_{i-1}\cup \{v_{q_i}\}$.
Since  for every $2\leq i\leq t$, we color the edges of $E_{f_i(u^\prime)}$ in $T_{u^\prime}$ with the leftmost PON of $v_{q_i}$ in $G_{W_{i-1}}$, by Theorem~\ref{th1}, to prove that  in Step~6.10 we achieve a realization of  $D^+_k$,  it suffices to show that  $L_{G_{W_0}}(v_{q_1})=\{\overrightarrow{v_{q_1}v_c}:c\in C(f_1(u^\prime))\setminus\{q_1\}\}$ (see Step~6.9). Obviously, $L_{G_{W_0}}(v_{q_1})=\{v_{p_1},\ldots,v_{p_k},v_{q_2},\ldots,v_{q_{t-j}}\}$, where $j=m-n_1-1$ and $j\leq t$ (if  $j>t$, then $L_{G_{W_0}}(v_{q_1})$ contains $m-j$ elements of $\{v_{p_1},\ldots,v_{p_{k}}\}$).
Note that  the color set of the edges in $E_{u^\prime}$ has been identified in the edge coloring of $T_{f_1(u^\prime)}$ and  in Step~6.10.3, we color the last $j$ edges of $E_{u^\prime}$ with  $\{q_{t-j+1},\ldots,q_t\}\subseteq C\setminus C(f_1(u^\prime))$. Hence, $C(f_1(u^\prime))$ containes the indices of the vertices in $L_{G_{W_0}}(v_{q_1})$,  as desired.

Finally, we prove that Algorithm  \ref{T}  is a polynomial time algorithm.
Let $n$ be the number of vertices of $T$. In Step~1,  Algorithm~\ref{2H} runs $n$ times to determine the value of $m$. The running time of this process is $O(n^4\log n)$. Since the other steps are   similar to the process in Algorithm~\ref{2H},  their time complexity is  at most $O(n^3\log n)$.
Therefore, the  running time of  Algorithm~\ref{T} is of order $O(n^4\log n)$.
}
\end{proof}
 
\section{Star chromatic index of certain trees}\label{upper}
In this section, we provide some tight bounds on the star chromatic index of $2H$-trees. Using these bounds we find a formula for the star chromatic index of regular $2H$-trees and the caterpillars. 

 In~\cite{class}, Bezegov{\'a} et al.  presented an algorithm that obtains a $\lfloor \frac{3\Delta}{2}\rfloor$-star edge coloring of every tree $T$ with maximum degree $\Delta$ \cite{class}. If we restrict their algorithm to the special case where $T$ is the $t$-regular $2H$-tree $T_{(t,t)}$, then we will have the following algorithm.  
\vspace*{5mm}
\hrule
\vspace*{-5mm}
\begin{algorithm1} \label{bezogva}\rm{\cite{class}}
Star edge coloring $c$ of $T_{(t,t)}$ with root $u$.
\vspace*{2pt}
\hrule
\medskip

{\bf Step 1.} For $i$ from $0$ to $t-1$, set $c(uu_{i+1}) = i+1$.

\medskip

{\bf Step 2.} For $i$ from $0$ to $t-1$ do the following steps.

\medskip

{\setlength\parindent{30pt}{\bf Step 2.1.} For $j$ from 1 to $\lfloor \frac{t}{2}\rfloor$ set $c(u_{i+1} f_{t-j+1}(u_{i+1})) = t + j$.

\medskip

{\bf Step 2.2.} For $j$ from 1 to $\lceil \frac{t}{2}\rceil-1$ do the following steps.
}
\medskip

{\setlength\parindent{36pt}{\bf Step 2.2.1.} Set $a = \left( i + j\pmod{t} \right) +1$.

\medskip

{\bf Step 2.2.2.} Color edge $u_{i+1} f_{j+1}(u_{i+1})$ with $c(u f_a(u))$.
}
\medskip

{\bf Step 3.} Return the edge coloring $c$ of $T_{(t,t)}$.
\vspace*{1mm}
\hrule
\end{algorithm1}
Algorithm~\ref{bezogva} does not always provide an optimum star edge coloring of an arbitrary tree.
In this algorithm, if we take  $M=\max\{t,n_t+1\}$, then we get $\chi^\prime_s(T_{n_1,\ldots,n_t})\leq \lfloor\frac{3M}{2}\rfloor$. In the following lemma and theorem,  we give more precise bounds for the star chromatic index of $T_{n_1,\ldots,n_t}$.

  \begin{lemma}\label{lem3}
  If $T_{(r,t)}$ is an  $r$-regular $2H$-tree, then
  \[
  \chi^\prime_s(T_{(r,t)})\leq 
  \begin{cases} 
   r+\displaystyle\left \lfloor\frac{t}{2}\right\rfloor& \text{if}\quad t\leq 2r-1,\\
   t& \text{if}\quad t\geq 2r.
    \end{cases}
\]
\end{lemma}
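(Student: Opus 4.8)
The plan is to exploit the equivalence of Theorem~\ref{p1p2}. Since every neighbour $u_i$ of the root has degree $r$, the tree $T_{(r,t)}$ is exactly $T_{n_1,\dots,n_t}$ with $n_1=\cdots=n_t=r-1$, so it suffices to exhibit, for the right value of $k$, an oriented graph realizing the OVS $D^+_k=((0,v_{t+1}),\ldots,(0,v_{t+k}),(r-1,v_1),\ldots,(r-1,v_t))$; such a realization yields a star edge coloring with $t+k$ colours, whence $\chi^\prime_s(T_{(r,t)})\le t+k$. For the regime $t\ge 2r$ I would take $k=0$, and for $t\le 2r-1$ I would take $k=r-\lceil t/2\rceil$, which is non-negative precisely because $t\le 2r-1$ forces $\lceil t/2\rceil\le r$; one checks $t+k=r+\lfloor t/2\rfloor$, matching the claimed bound.

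First I would dispose of $t\ge 2r$. Place $v_1,\dots,v_t$ on a cycle and orient each $v_i$ toward its $r-1$ cyclic successors $v_{i+1},\dots,v_{i+r-1}$, indices read modulo $t$; this is well defined because $r-1<t$, so no vertex points to itself or repeats an out-neighbour. Every $v_i$ then has outdegree $r-1$. A $2$-cycle would require a cyclic distance $s$ with simultaneously $1\le s\le r-1$ and $t-s\le r-1$, i.e. $t-r+1\le s\le r-1$; this interval is empty whenever $t\ge 2r-1$, hence in particular for $t\ge 2r$ there is no opposite pair of arcs and we have a genuine oriented graph. Thus $D^+_0$ is an OVGS and $\chi^\prime_s(T_{(r,t)})\le t$.

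For $t\le 2r-1$ I would introduce the $k=r-\lceil t/2\rceil$ zero-outdegree vertices $v_{t+1},\dots,v_{t+k}$ and let each $v_i$ with $1\le i\le t$ send one arc to every one of them — these arcs can never lie on a $2$-cycle, as the extra vertices have no out-arcs — together with arcs to its $\lceil t/2\rceil-1$ cyclic successors among $v_1,\dots,v_t$. The total outdegree of each $v_i$ is then $k+(\lceil t/2\rceil-1)=r-1$, as required, and again $\lceil t/2\rceil-1<t$ keeps the successors distinct and self-loop-free. Repeating the $2$-cycle test on the cyclic part, an opposite pair would need a distance $s$ with $t-\lceil t/2\rceil+1\le s\le \lceil t/2\rceil-1$, and this interval is empty because $2\lceil t/2\rceil\le t+1$. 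Hence $D^+_k$ is an OVGS and $\chi^\prime_s(T_{(r,t)})\le t+k=r+\lfloor t/2\rfloor$.

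The only real work is the $2$-cycle check for the circulant orientations, and the point to watch is the boundary: the two estimates agree at $t=2r-1$ (both equal $2r-1$), and the arithmetic fact $2\lceil t/2\rceil\le t+1$ is exactly what empties the critical distance-interval in the first case, just as $t\ge 2r-1$ does in the second. Once the realization is in hand, translating it into the coloring is routine via the recipe following Theorem~\ref{p1p2}: colour $uu_i$ with $i$ and colour the edges of $E_{u_i}$ by the subscripts of the out-neighbours of $v_i$.
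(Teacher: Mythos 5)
Your proof is correct, but it reaches the bound by a genuinely different route than the paper. You work entirely inside the paper's own digraph machinery: you invoke the equivalence of Theorem~\ref{p1p2} and exhibit an explicit circulant realization of $D^+_k$ (arcs from $v_i$ to its cyclic successors among $v_1,\ldots,v_t$, plus arcs to $k$ zero-outdegree vertices), with the star condition reduced to checking that a digon would force a cyclic distance $s$ in an empty interval --- $t-r+1\le s\le r-1$ when $t\ge 2r$, and $t-\lceil t/2\rceil+1\le s\le\lceil t/2\rceil-1$ when $t\le 2r-1$; your arithmetic ($t\ge 2r-1$ empties the first, $2\lceil t/2\rceil\le t+1$ the second) and your degree counts ($k+\lceil t/2\rceil-1=r-1$, $t+k=r+\lfloor t/2\rfloor$) all check out. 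The paper instead starts from the coloring of the regular tree $T_{(t,t)}$ produced by Algorithm~\ref{bezogva} (the restriction of the algorithm of Bezegov{\'a} et al.~\cite{class}) and performs surgery on it: when $r\ge t$ it extends with $r-t$ fresh colors, and when $r<t$ it deletes $t-r$ color classes from each $E_{u_i}$ (a fixed subset of the extra colors $\{t+1,\ldots,t+\lfloor t/2\rfloor\}$ if $t\le 2r-1$, the largest colors if $t\ge 2r$). It is worth noticing that the two constructions actually yield the same coloring --- in Algorithm~\ref{bezogva} the edges of $E_{u_i}$ colored by Step~2.2 reuse the colors of the cyclically succeeding root-edges, which is exactly your circulant part, and the extra colors of Step~2.1 correspond to your zero-outdegree vertices --- but your derivation is self-contained and handles all parameter ranges uniformly with one digon check, whereas the paper's version imports the known algorithm (which also serves its later algorithmic discussion) at the cost of the case split $r\ge t$ versus $r<t$.
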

  \begin{proof}
{Let $u_1, \ldots, u_t$ be the neighbours of the root $u$ in $T_{(r,t)}$. To obtain the upper bounds, we present a star edge coloring for $T_{(r,t)}$ with the number of colors equals to the bound in each case. To obtain such colorings we use Algorithm \ref{bezogva}.
Let $c$ be the star edge coloring of $T_{(t,t)}$ provided by Algorithm~\ref{bezogva}.
We have two possibilities for $r$ and $t$: either $r\geq t$, or $r < t$.
In each case, we transform the coloring $c$ of $T_{(t,t)}$ to a coloring $c'$ of $T_{(r,t)}$ as follows.

If $r\geq t$, then we color the subgraph $T_{(t,t)}$  of $T_{(r,t)}$ with coloring $c$ using $t + \lfloor \frac{t}{2} \rfloor $ colors. We now have $(r-t)$ uncolored edges in each $E_{u_i}$. We use $(r-t)$ new colors for the remaining edges to extend coloring $c$ into an edge coloring $c'$ for $T_{(r,t)}$ using 
$$t +\left \lfloor \frac{t}{2} \right\rfloor  + (r-t) = r + \left\lfloor \frac{t}{2} \right\rfloor $$
colors. Note that using the new $(r-t)$ colors in $E_{u_i}$'s, $1\leq i \leq t$, does not create a bi-colored path of length four. Therefore, $c'$ is a star edge coloring of $T_{(r,t)}$. 

  \par  If $r<t$, then again we have two cases: either $t \leq 2r-1$, or $t \geq 2r$. 
  In both cases, we remove $(t-r)$ edges in each $E_{u_i}$, $1\leq i \leq t$, from  $T_{(t,t)}$ as follows. Note that in Algorithm~\ref{bezogva} the color set of the  last $\lfloor\frac{t}{2}\rfloor$ edges of each $E_{u_i}$ is $Y=\{t+1,\ldots, t+\lfloor\frac{t}{2}\rfloor\}$ (consisting of $\lfloor \frac{t}{2} \rfloor$ colors). Moreover, note that the colors of $Y$ are not used for coloring any edge $u u_i$, $1\leq i \leq t$. If $t \leq 2r-1$ or equivalently $\lfloor \frac{t}{2} \rfloor \geq (t-r)$, then assume that $A$ is fixed subset of $(t-r)$ colors from $Y$. Since the color set of every $E_{u_i}$ contains the colors in $A$, if we delete all of the edges with colors in $A$ from $E_{u_i}$, then we obtain a star coloring $c'$ of $T_{(r,t)}$ using 
$$ t + \left\lfloor \frac{t}{2} \right\rfloor  - (t-r) = r + \left\lfloor \frac{t}{2} \right\rfloor $$
colors. 
If $t\geq 2r$ or equivalently $\lfloor \frac{t}{2} \rfloor < (t-r)$, then 
we delete the $(t-r)$ edges with the largest colors from each $E_{u_i}$, for $1\leq i \leq t$.
Note that this way, all of the edges with colors in $Y$ are deleted from each $E_{u_i}$,  $1\leq i \leq t$, since colors in $Y$ are the largest colors used in $c$.
Hence, we obtain a star edge coloring $c'$ of $T_{(r,t)}$ using at most
$$t + \left\lfloor \frac{t}{2}\right \rfloor  -  \left\lfloor \frac{t}{2} \right\rfloor = t $$
colors  and the proof is complete.
  }
  \end{proof}
  
%
  \begin{theorem}\label{thbound}
  If $T_{ n_1, \ldots, n_t}$ is a $2H$-tree  and $\sigma_t = \sum_{i=1}^t n_i$, then 
  \[
  \frac{\sigma_t}{t} + \displaystyle\left\lceil\frac{t+1}{2}\right\rceil\leq \chi^\prime_s(T_{n_1,\ldots,n_t})\leq 
\begin{cases}
    n_t +1+ \displaystyle\left\lfloor\frac{t}{2}\right\rfloor & \text{if }\quad t\leq 2n_t+1, \\
    t & \text{if }\quad t\geq 2n_t+2.
\end{cases}
\]
  \end{theorem}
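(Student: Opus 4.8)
The plan is to prove the two inequalities by different means: the upper bound by embedding $T_{n_1,\ldots,n_t}$ into a regular $2H$-tree and invoking Lemma~\ref{lem3}, and the lower bound through the digraph reformulation of Theorem~\ref{p1p2} together with a single global edge count.

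For the upper bound I would first note that $T_{n_1,\ldots,n_t}$ is a subgraph of the $(n_t+1)$-regular $2H$-tree $T_{(n_t+1,\,t)}$: since $n_i\le n_t$ for every $i$, deleting $n_t-n_i$ of the leaves at each $u_i$ turns $T_{(n_t+1,t)}$ into $T_{n_1,\ldots,n_t}$. Because the restriction of any star edge coloring to a subgraph is again a star edge coloring (a restriction cannot create a new bi-colored path or cycle of length four), the star chromatic index is monotone under taking subgraphs, so $\chi^\prime_s(T_{n_1,\ldots,n_t})\le\chi^\prime_s(T_{(n_t+1,t)})$. Feeding $r=n_t+1$ into Lemma~\ref{lem3} then gives $n_t+1+\lfloor t/2\rfloor$ when $t\le 2(n_t+1)-1=2n_t+1$ and $t$ when $t\ge 2(n_t+1)=2n_t+2$, which is precisely the stated upper bound. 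The one point to get right here is that the correct regular host is $T_{(n_t+1,t)}$ rather than $T_{(n_t,t)}$, a choice forced by the identity $d(u_i)=n_i+1$.

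For the lower bound I would pass to Problem~\ref{p2}: by Theorem~\ref{p1p2} we have $\chi^\prime_s(T_{n_1,\ldots,n_t})=t+k^\ast$, where $k^\ast$ is the least $k$ for which $D^+_k$ admits a realization $G$ on the vertices $v_1,\ldots,v_{t+k}$. In any such oriented graph the $k$ vertices $v_{t+1},\ldots,v_{t+k}$ all have outdegree $0$, and since $G$ has no digons no edge can join two of them, as such an edge would force one of them to have positive outdegree. Consequently every one of the $|E(G)|=\sum_{i=1}^t n_i=\sigma_t$ arcs is incident to $\{v_1,\ldots,v_t\}$, and counting the available pairs gives
\[
\sigma_t \;\le\; \binom{t+k}{2}-\binom{k}{2} \;=\; \binom{t}{2}+tk .
\]
Rearranging yields $k\ge \sigma_t/t-(t-1)/2$, whence $\chi^\prime_s=t+k\ge \sigma_t/t+(t+1)/2$.

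The hard part will be passing from this real-valued estimate to the ceiling in the statement. Since $k$ is a nonnegative integer I would sharpen the bound to $k\ge\bigl\lceil(\sigma_t-\binom{t}{2})/t\bigr\rceil$ and then split on the parity of $t$: for odd $t$ the quantity $(t+1)/2$ is already an integer, and the displayed inequality rearranges immediately into $\sigma_t/t+\lceil(t+1)/2\rceil$; for even $t$ one must use the integrality of $k$ to account for the extra half-unit, tracking the rounding with care. This parity/rounding bookkeeping is the only genuinely delicate step, and it is exactly where the precise form of the ceiling in the statement must be reconciled with the estimate; everything else reduces to routine counting in the oriented graph.
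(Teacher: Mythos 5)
Your upper-bound argument is, in substance, exactly the paper's: monotonicity of $\chi'_s$ under subgraphs plus Lemma~\ref{lem3}, and your choice of host $T_{(n_t+1,t)}$ is indeed the right one. The paper writes that $T_{n_1,\ldots,n_t}$ is a subgraph of $T_{(n_t,t)}$, which is an off-by-one slip, since in $T_{(r,t)}$ every neighbour of the root has degree $r$, i.e.\ $n_i=r-1$; plugging $r=n_t+1$ into Lemma~\ref{lem3} reproduces the two cases of the statement verbatim, so you correctly repaired this detail. Your lower bound is also essentially the paper's count in different clothing: the paper works directly in the coloring, splitting the level-two edges into $B_1$ (colors from $\{1,\ldots,t\}$, with $|B_1|\le\binom{t}{2}$ because the star condition forbids both pairs $(E_{u_i},j)$ and $(E_{u_j},i)$) and $B_2$ (with $|B_2|\le tk$), giving $\sigma_t\le\binom{t}{2}+tk$; your arc count in a realization of $D^+_k$ is the same inequality transported through the bijection of Theorem~\ref{p1p2}, since arcs $\overrightarrow{v_iv_j}$ with $j\le t$ correspond exactly to the pairs counted in $S_c$.

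The one step you left open --- upgrading $\chi'_s\ge \sigma_t/t+(t+1)/2$ to the stated $\sigma_t/t+\lceil(t+1)/2\rceil$ --- cannot be completed, and your suspicion about the ``delicate rounding'' was well founded. Integrality of $k$ yields $\chi'_s\ge t+\bigl\lceil \sigma_t/t-(t-1)/2\bigr\rceil$, i.e.\ the ceiling of the whole expression, not of the second summand alone; for even $t$ with $t\nmid\sigma_t$ the stated form is strictly stronger and in fact false. Concretely, for $T_{1,2}$ ($t=2$, $\sigma_t=3$) the statement asserts $\chi'_s\ge 3.5$, yet $\chi'_s(T_{1,2})=3$: color $uu_1=1$, $uu_2=2$, the pendant edge at $u_1$ with $3$, and $E_{u_2}$ with $\{1,3\}$ (equivalently, $D^+_1=((0,v_3),(1,v_1),(2,v_2))$ is already an OVGS). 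The paper's own proof ends with the un-ceilinged inequality and never justifies the ceiling; the ceiling form is legitimate precisely when $t$ is odd (the ceiling is vacuous) or when $t\mid\sigma_t$ (then $\sigma_t/t$ is an integer and integrality of $\chi'_s$ absorbs the half-unit), which covers the regular trees $T_{(r,t)}$ with $\sigma_t/t=r-1$ --- all that Corollary~\ref{regular} actually needs. So the gap you flagged is a defect of the theorem statement, not of your argument, and apart from it your proof is at parity with the paper's.
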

  \begin{proof}
  {Let $u_1, \ldots, u_t$ be the neighbours of the root $u$ in $T_{n_1,\ldots,n_t}$.
As we mentioned in Section~\ref{tree}, we know that 
 $\chi'_s(T_{n_1, \ldots, n_t}) = t + k$, for some non-negative integer $k$.
We now find a lower bound on $k$ as follows.

As we know $T_{n_1, \ldots, n_t}$ is a tree of  height at most two, and therefore it has two levels.
Let $A$ denote the set of edges in $T_{n_1, \ldots, n_t}$  incident to $u$, and $B$ denote the set of edges in $T_{n_1, \ldots, n_t}$ incident to vertices  in level $3$.
Clearly, $A\cap B = \emptyset$ and $E(T_{n_1, \ldots, n_t}) = A \cup B$; that is, $\{A, B\}$ is a partition for the edge set of $T_{n_1, \ldots, n_t}$.

Now assume that $c$ is a star edge coloring for $T_{n_1, \ldots, n_t}$ with $t+k$ colors, where the colors are taken from set $\{1, \ldots, t+k\}$. Note that set $A$ consists of exactly $t$ edges that all meet vertex $u$. Therefore, $c(A)$ must contain $t$ distinct colors.
Without loss of generality assume that $c(A) =\{1, \ldots, t\}$ and $c(uu_i) = i$, for $1\leq i\leq t$. Also, note that
every edge in $B$ receives a color either from $\{1,\ldots, t\}$, or from $\{t+1, \ldots , t+k\}$. 
Let $B_1$ denote the subset of edges in $B$ that receive a color from $\{1,\ldots, t\}$, and $B_2$ denote the subset of edges in $B$ that receive a color from $\{t+1,\ldots, t+k\}$.
Clearly, 
\begin{align}
|B_1| + |B_2|= |B|  = \sum_{i=1}^t \left( d(u_i)-1 \right)= \sum_{i=1}^t n_i = \sigma_t. \label{sigmat}
\end{align}

For $1\leq i,j\leq t$, let $S_c$ be the set of ordered pairs $(E_{u_i} , j)$ that color $j$ is used for an edge in $E_{u_i}$.
It is easy to see that there is a bijection between $S_c$ and $B_1$, and therefore, $|S_c| = |B_1|$.
Moreover, since $c$ is a star edge coloring of $T_{n_1,\ldots,n_t}$, for every    $1\leq i , j \leq t$, only one of the pairs $(E_{u_i}, j)$ and $(E_{u_j}, i)$ may belong to $S_c$.
Thus, we conclude that $|B_1|  = |S_c| \leq \frac{t(t-1)}{2}$. 
On the other hand, every color $j \in \{t+1, \ldots, t+k\}$ could have been used for coloring an edge in every $E_{u_i}$, for $1\leq i\leq t$.
Therefore, $|B_2|  \leq tk$. Hence, by \eqref{sigmat}, we have 
$$ \frac{t(t-1)}{2} + tk \geq  |B_1| + |B_2|  = |B| = \sigma_t.$$
Therefore, we conclude that
\begin{align*}
\chi'_s(T_{n_1, \ldots, n_t}) = t + k \geq \frac{\sigma_t}{t} + \frac{t+1}{2}. \label{lowerb2}
\end{align*}

Moreover,  $T_{n_1,\ldots,n_t}$ is a subgraph of $2H$-tree $T_{(n_t,t)}$. Hence by Lemma~\ref{lem3}, the upper bound for the star chromatic index of $T_{n_1,\ldots,n_t}$ is clearly established. 
  }
  \end{proof}
  
  By Lemma~\ref{lem3} and Theorem~\ref{thbound}, we have the following corollary, that shows both bounds in Theorem~\ref{thbound}  are tight.
  Note that when $t>2r$,  the maximum degree of $T_{(r,t)}$ is $t$. Hence, in this case  $\chi^\prime_s(T_{(r,t)})= t$.
  \begin{corollary}\label{regular}
  If $T_{(r,t)}$ is an  $r$-regular $2H$-tree, then 
  \[
  \chi^\prime_s(T_{(r,t)})=
  \begin{cases} 
   r+\displaystyle\left \lfloor\frac{t}{2}\right\rfloor& \text{if}\quad t\leq 2r-1,\\
   t& \text{if}\quad t\geq 2r.
    \end{cases}
\]
  \end{corollary}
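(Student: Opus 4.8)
The plan is to notice that the two upper bounds in the statement are literally the content of Lemma~\ref{lem3}, so the entire task reduces to supplying matching lower bounds in the two regimes. First I would record the translation of $T_{(r,t)}$ into the $T_{n_1,\ldots,n_t}$ notation: since every neighbour of the root has degree $r$, each set $E_{u_i}$ contains $r-1$ edges, so $n_1=\cdots=n_t=r-1$. Consequently $\sigma_t=\sum_{i=1}^t n_i=t(r-1)$, which gives the clean value $\sigma_t/t=r-1$ that will feed directly into Theorem~\ref{thbound}.

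For the case $t\leq 2r-1$ I would simply substitute these values into the lower bound of Theorem~\ref{thbound}, obtaining
\[
\chi^\prime_s(T_{(r,t)})\geq (r-1)+\left\lceil\frac{t+1}{2}\right\rceil .
\]
The only arithmetic point is the elementary identity $\lceil (t+1)/2\rceil=\lfloor t/2\rfloor+1$, which holds for every integer $t$ (one checks the even and the odd case separately). This rewrites the right-hand side as $r+\lfloor t/2\rfloor$, which is exactly the upper bound of Lemma~\ref{lem3} in this regime, so the two bounds coincide and equality follows.

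For the case $t\geq 2r$ the bound of Theorem~\ref{thbound} is no longer sufficient: it still evaluates to $r+\lfloor t/2\rfloor$, and this is strictly smaller than $t$ as soon as $t>2r$, so I would not rely on it here. Instead I would use the trivial bound $\chi^\prime_s(G)\geq\Delta(G)$, which is valid because a star edge coloring is in particular a proper edge coloring. As $r>0$ forces $t\geq 2r>r$, the root (of degree $t$) is a vertex of maximum degree, whence $\Delta(T_{(r,t)})=t$ and therefore $\chi^\prime_s(T_{(r,t)})\geq t$; combined with Lemma~\ref{lem3} this yields equality. The boundary value $t=2r$ is handled by either argument, since both the Theorem~\ref{thbound} bound and the degree bound return $t$ there.

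I expect no serious obstacle: the result is a direct consequence of the two previously established statements, and the corollary's own remark already flags the maximum-degree argument for $t>2r$. The one thing to be careful about is the case division, namely recognising that Theorem~\ref{thbound} gives a tight lower bound precisely for $t\leq 2r$, whereas for $t>2r$ it is the maximum-degree bound that forces $\chi^\prime_s(T_{(r,t)})=t$. Keeping track of this split, together with the floor/ceiling identity above, completes the argument.
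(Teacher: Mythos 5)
Your proposal is correct and follows essentially the same route as the paper: the upper bounds are taken from Lemma~\ref{lem3}, the lower bound for $t\leq 2r-1$ comes from substituting $\sigma_t=t(r-1)$ into Theorem~\ref{thbound} (with the identity $\left\lceil\frac{t+1}{2}\right\rceil=\left\lfloor\frac{t}{2}\right\rfloor+1$ making the bounds meet at $r+\left\lfloor\frac{t}{2}\right\rfloor$), and for $t>2r$ the paper likewise invokes the maximum-degree bound $\chi^\prime_s(T_{(r,t)})\geq\Delta=t$, exactly as you do. Your explicit observation that Theorem~\ref{thbound}'s bound still suffices at the boundary $t=2r$ is a slightly more careful bookkeeping of the case split than the paper's remark, but the argument is the same.
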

  
 Our goal in the rest of this section is to find the star chromatic index of the caterpillars.
For this purpose, first we prove the following theorem.  Note that if $T_{n_1,\ldots,n_t}$ is a $2H$-tree with $t=1$, then $T_{n_1,\ldots,n_t}$ is a star and clearly, $\chi^\prime_s(T_{n_1,\ldots,n_t})=t$.
\begin{theorem}\label{ntnt}
If in a $2H$-tree $T_{n_1, \ldots, n_t}$ \rm{(}$t\geq 2$\rm{)} with root $u$ and maximum degree $\Delta$, we have $n_1 = \cdots = n_{t-2} = 0$, then  
 $$\Delta \leq \chi'_s(T_{n_1, \ldots, n_t}) \leq \Delta +1.$$
Moreover, 
$\chi'_s(T_{n_1, \ldots, n_t}) = \Delta + 1$ if and only if $d(u_{t-1}) = d(u_t) = \Delta$.
\end{theorem}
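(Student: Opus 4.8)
The plan is to analyze the structure of $T_{n_1,\ldots,n_t}$ under the hypothesis $n_1=\cdots=n_{t-2}=0$. This means all neighbours $u_1,\ldots,u_{t-2}$ of the root $u$ are leaves, and only $u_{t-1}$ and $u_t$ may carry pendant edges, with $n_{t-1}\le n_t$. The maximum degree is $\Delta=\max\{t,\, n_t+1\}$, since the root has degree $t$ and $u_t$ has degree $n_t+1$. I would first establish the lower bound $\chi'_s(T_{n_1,\ldots,n_t})\ge\Delta$, which is immediate: the star chromatic index is at least the chromatic index of any star centered at a maximum-degree vertex, hence at least $\Delta$. For the upper bound $\chi'_s\le\Delta+1$, I would invoke Theorem~\ref{thbound} or construct a coloring directly; in this sparse setting only two vertices in level $2$ have children, so a bi-colored path of length four must pass through both $u_{t-1}$ and $u_t$, and the constraints are few enough that $\Delta+1$ colors always suffice.

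The substantive content is the characterization of equality. The plan is to prove the two directions separately. For the ``if'' direction, assume $d(u_{t-1})=d(u_t)=\Delta$, so $n_{t-1}=n_t=\Delta-1$ and $t\le\Delta$. I would argue by contradiction that no $\Delta$-star edge coloring exists. With $\Delta$ colors and $c(uu_i)=i$ as in Theorem~\ref{p1p2}, the edges in $E_{u_{t-1}}$ must use $\Delta-1$ colors from $\{1,\ldots,\Delta\}\setminus\{t-1\}$, and likewise $E_{u_t}$ uses $\Delta-1$ colors avoiding $t$. A counting/pigeonhole argument on the forbidden reciprocal pairs (if color $t$ appears in $E_{u_{t-1}}$ then color $t-1$ cannot appear in $E_{u_t}$, and vice versa) should force a bi-colored path $x u_{t-1} u u_t y$, giving the contradiction. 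Equivalently, in the language of Problem~\ref{p2}, I would show the corresponding OVS $D^+_0$ is not an OVGS because the outdegree demands of $v_{t-1}$ and $v_t$ cannot be met simultaneously without creating both arcs $\overrightarrow{v_{t-1}v_t}$ and $\overrightarrow{v_t v_{t-1}}$.

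For the ``only if'' direction, I would prove the contrapositive: if it is \emph{not} the case that $d(u_{t-1})=d(u_t)=\Delta$, then $\chi'_s=\Delta$. This splits into cases according to whether $\Delta=t$ (root is a maximum-degree vertex) or $\Delta=n_t+1>t$, and within the latter whether $n_{t-1}<n_t$. In each case I would exhibit an explicit $\Delta$-star edge coloring, or equivalently verify that $D^+_0$ is an OVGS via the realizability criterion underlying Algorithm~\ref{digraph1}: the slack created by $u_{t-1}$ or $u_t$ having degree strictly less than $\Delta$ frees up exactly enough room to avoid the reciprocal-arc conflict.

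The main obstacle I expect is the equality characterization, specifically the ``if'' direction, where I must rule out \emph{every} $\Delta$-coloring rather than just the greedy one. The cleanest route is probably to pass through the OVS/OVGS equivalence of Theorem~\ref{p1p2} and argue purely about realizability of $D^+_0$: showing that the two large outdegree demands $n_{t-1}=n_t=\Delta-1$, competing for the same $\Delta-1$ available targets each (all vertices except themselves, since low-indexed $v_i$ have zero outdegree and impose no incoming-arc restrictions from the root side), are incompatible without forcing a $2$-cycle on $\{v_{t-1},v_t\}$. Getting the bookkeeping of available out-neighbours exactly right — accounting for the $t-2$ zero-outdegree vertices and the precise interaction between $t$ and $\Delta$ — is where the care is needed.
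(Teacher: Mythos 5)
Your proposal is correct, and on the crux --- the equality characterization --- it takes a genuinely more self-contained route than the paper. For the ``if'' direction the paper observes that $T_{n_{t-1},n_t}$ with $n_{t-1}=n_t=\Delta-1$ is the regular $2H$-tree $T_{(\Delta,2)}$ and cites Corollary~\ref{regular} (whose lower bound ultimately rests on the counting argument in Theorem~\ref{thbound}) to get $\chi'_s\geq\Delta+1$; you instead argue directly that in \emph{any} $\Delta$-coloring, properness forces the $\Delta-1$ edges of $E_{u_{t-1}}$ to use every color except $c(uu_{t-1})$, and likewise for $E_{u_t}$, so $c(uu_t)$ appears in $E_{u_{t-1}}$ and $c(uu_{t-1})$ appears in $E_{u_t}$, producing a bi-colored path $xu_{t-1}uu_ty$ --- equivalently, the two outdegree-$(\Delta-1)$ demands in the OVS force a $2$-cycle between $v_{t-1}$ and $v_t$. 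This pigeonhole rules out every $\Delta$-coloring, which is exactly the worry you flagged, and it is essentially the $t=2$ instance of the paper's counting bound unpacked; it buys independence from the Section~6 machinery at the cost of a little bookkeeping. For the ``only if'' direction the paper uses monotonicity to reduce to the single extremal case $t=\Delta$, $n_{t-1}=\Delta-2$, $n_t=\Delta-1$ and exhibits one explicit coloring, whereas you do a case split on $\Delta=t$ versus $\Delta=n_t+1>t$; both work, and your structural observation that every path of length four must pass through both $u_{t-1}$ and $u_t$ is precisely what makes each verification trivial. One caution on the upper bound: invoking Theorem~\ref{thbound} on the whole tree gives only $n_t+1+\lfloor t/2\rfloor$, which exceeds $\Delta+1$ once $t\geq 4$, so the bare citation you mention as one option does not suffice; the paper's fix is to apply the bound with $t=2$ to the subtree $T_{n_{t-1},n_t}$ (getting $n_t+2\leq\Delta+1$) and then extend to the pendant edges $uu_1,\ldots,uu_{t-2}$, while your direct-construction alternative (choose the colors of $E_{u_{t-1}}$ avoiding $c(uu_t)$, which is possible since $n_{t-1}\leq\Delta-1$ and the palette of $\Delta+1$ colors leaves $\Delta-1$ choices after excluding $c(uu_{t-1})$ and $c(uu_t)$) also closes this gap --- just make sure you take that branch rather than the citation.
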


\begin{proof}
{Clearly, $\chi'_s(T_{n_1, \ldots, n_t}) \geq \Delta$ (as it holds for every graph with maximum degree $\Delta$).
For proving the upper bound note that $T_{n_{t-1}, n_t}$ is a subtree of $T_{n_1, \ldots, n_t}$.
Also, by Theorem \ref{thbound}, we have 
$$\chi'_s(T_{n_{t-1}, n_t}) \leq  n_t + 2 \leq   \Delta +1.$$
This means that there is a star edge coloring $c$ of $T_{n_{t-1}, n_t}$ with at most $\Delta +1$ colors $\{1, \ldots, \Delta +1\}$.
We now use coloring $c$ to present a star edge coloring of $T_{n_1, \ldots, n_t}$ with $\Delta + 1$ colors as follows. We first color the edges of the subtree $T_{n_{t-1}, n_t}$ of $T_{n_1, \ldots, n_t}$ with coloring $c$. It remains to color the edges $u u_i$, $1\leq i \leq t-2$.
Note that $d(u)=t \leq \Delta$.
Hence, by assigning different colors of $\{1, \ldots, \Delta + 1\} \setminus \{c(uu_{t-1}) , c(uu_t)\}$ to $u u_i$'s, $1\leq i \leq t-2$, we make sure that the edges incident to $u$ receive distinct colors.
Also, since $n_1 = \cdots = n_{t-2} = 0$, clearly we have no bi-colored path of length four.
Therefore, the obtained coloring is a star edge coloring of $T_{n_1, \ldots, n_t}$, and the upper bound is proved.

Now if $d(u_{t-1}) = d(u_t) = \Delta$, then by Corollary \ref{regular} and the above argument, we have 
$$ \Delta + 1 \geq \chi'_s(T_{n_1, \ldots, n_t}) \geq \chi'_s(T_{n_{t-1}, n_t}) = \Delta +1.$$
Thus, we conclude that in such a case, $\chi'_s(T_{n_1, \ldots, n_t}) = \Delta +1$.

For proving the converse direction, suppose that at least one of $u_{t-1}$ and $u_t$ has degree less than $\Delta$.
We claim that in this case, $\chi'_s(T_{n_1, \ldots, n_t}) = \Delta$. By monotonicity of the star chromatic index over the subgraphs of a graph, it suffices to prove the claim for the case where root $u$ is of degree $\Delta$, $n_1 = \cdots = n_{t-2} = 0$, $n_{t-1} = \Delta -2$, and $n_t = \Delta -1$. For this purpose, we define an edge coloring $c$ for $T_{n_1, \ldots, n_t}$ in which $c(uu_i) = i$, for $1\leq i \leq \Delta$ (note that here, $t = \Delta$), $\{1, \ldots, \Delta -1\}$ is the set of colors used for coloring the edges incident to $u_{t-1}$ and $\{1, \ldots, \Delta\}$ is the set of colors used for coloring edges incident to $u_t$. It is easy to check that $c$ is a $\Delta$-star edge coloring of $T_{n_1, \ldots, n_t}$. Therefore, 
$\chi'_s(T_{n_1, \ldots, n_t}) = \Delta$ in such a case.
Hence, $\chi'_s(T_{n_1, \ldots, n_t}) = \Delta +1$ if and only if both $u_{t-1}$ and $u_t$ are of degree $\Delta$.}
\end{proof}

We now use Theorem \ref{maxtr} and \ref{ntnt} to find a characterization of the star chromatic index of the caterpillars in terms of their maximum degree.
 
\begin{theorem}\label{catp}
If $T$ is a caterpillar with maximum degree $\Delta$, then
 $$\Delta \leq \chi'_s(T) \leq \Delta +1.$$
Moreover, $\chi'_s(T) = \Delta +1$ if and only if $T$ contains two vertices $u$ and $v$ of degree $\Delta$ of distance~two.
\end{theorem}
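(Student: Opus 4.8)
The plan is to reduce everything to the two structural results already available: Theorem~\ref{maxtr}, which says $\chi'_s(T)=\max\{\chi'_s(T_v):v\in V(T)\}$, and Theorem~\ref{ntnt}, which pins down the star chromatic index of a $2H$-tree whose root has all but two of its branches being leaves. The key observation is that for a caterpillar $T$, \emph{every} subgraph $T_v$ is exactly a $2H$-tree of the type handled by Theorem~\ref{ntnt}. Indeed, removing the leaves of a caterpillar leaves a path (the \emph{spine}); if $v$ is a spine vertex, then at most two of its neighbours ($f_1(v)$ and the next spine vertex) can themselves have degree larger than one, while every other neighbour of $v$ is a leaf and hence contributes $n_i=0$ to $T_v$. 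If $v$ is a leaf, then $T_v$ is even simpler. So in all cases $T_v=T_{n_1,\ldots,n_t}$ with $n_1=\cdots=n_{t-2}=0$, which is precisely the hypothesis of Theorem~\ref{ntnt}.

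First I would establish the inequality $\Delta\le\chi'_s(T)\le\Delta+1$. The lower bound is immediate since $T$ has maximum degree $\Delta$. For the upper bound, combine Theorem~\ref{maxtr} with Theorem~\ref{ntnt}: for each $v$, the maximum degree within $T_v$ is at most $\Delta$, so Theorem~\ref{ntnt} gives $\chi'_s(T_v)\le\Delta_v+1\le\Delta+1$, where $\Delta_v$ denotes the maximum degree occurring in $T_v$. Taking the maximum over $v\in V(T)$ and applying Theorem~\ref{maxtr} yields $\chi'_s(T)\le\Delta+1$.

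Next I would prove the characterization. For the ``if'' direction, suppose $T$ contains two vertices $u$ and $v$ of degree $\Delta$ at distance two; let $w$ be their common neighbour. Then consider $T_w$: both $u$ and $v$ are neighbours of $w$ of degree $\Delta$, and they play the role of $u_{t-1},u_t$ in Theorem~\ref{ntnt}, so $\chi'_s(T_w)=\Delta+1$; by Theorem~\ref{maxtr}, $\chi'_s(T)\ge\chi'_s(T_w)=\Delta+1$, and with the upper bound this forces equality. For the ``only if'' direction, suppose no two degree-$\Delta$ vertices are at distance two. I would argue that then every $T_v$ satisfies $\chi'_s(T_v)=\Delta$. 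Fix $v$ and write $T_v=T_{n_1,\ldots,n_t}$ with root $v$. If $\Delta_v<\Delta$ then trivially $\chi'_s(T_v)\le\Delta_v+1\le\Delta$. If $\Delta_v=\Delta$, then by Theorem~\ref{ntnt}, $\chi'_s(T_v)=\Delta+1$ would require the two largest-degree neighbours $u_{t-1},u_t$ of $v$ to \emph{both} have degree $\Delta$ in $T_v$; but two neighbours of $v$ having degree $\Delta$ are two degree-$\Delta$ vertices of $T$ at distance two through $v$, contradicting our assumption (one must also check that $v$ itself being a degree-$\Delta$ vertex does not create a forbidden configuration with a neighbour of degree $\Delta$, which is again a distance-two pair). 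Hence $\chi'_s(T_v)=\Delta$ for every $v$, and Theorem~\ref{maxtr} gives $\chi'_s(T)=\Delta$.

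The main obstacle I anticipate is the careful bookkeeping in the ``only if'' direction: one must verify that \emph{every} vertex $v$ of the caterpillar gives a $T_v$ of the special shape required by Theorem~\ref{ntnt} (this relies on the spine structure), and then translate the condition ``$d(u_{t-1})=d(u_t)=\Delta$'' in Theorem~\ref{ntnt} correctly into the statement ``two degree-$\Delta$ vertices at distance two in $T$.'' In particular I must be careful about whether the two high-degree neighbours are counted using their degrees in $T$ or their induced degrees in $T_v$ (they coincide, since all edges at distance two from $v$ lie in $T_v$), and I must handle the boundary case where $v$ is a leaf or an endpoint of the spine separately so that the ``all but two neighbours are leaves'' hypothesis genuinely holds.
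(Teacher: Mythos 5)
Your proposal is correct and follows essentially the same route as the paper: combine Theorem~\ref{maxtr} with Theorem~\ref{ntnt}, using the observation that in a caterpillar at most two neighbours of any vertex $v$ are non-leaves, so every $T_v$ has $n_1=\cdots=n_{t-2}=0$, and then translating the condition $d(u_{t-1})=d(u_t)=\Delta$ into the existence of two degree-$\Delta$ vertices at distance two. One minor blemish: your parenthetical worry about $v$ itself having degree $\Delta$ is vacuous (and mislabelled---such a pair is at distance \emph{one}), since the criterion of Theorem~\ref{ntnt} involves only the two top neighbours $u_{t-1},u_t$ and not the root, so your main argument is unaffected.
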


\begin{proof}
{By Theorem \ref{maxtr}, we know that $\chi'_s(T) = \max\{\chi'_s(T_v) : v\in V(T) \}$. 
It is easy to see that by definition of the caterpillars, for every $v\in V(T)$, $T_v$ is a $2H$-tree in which the root $v$ has at most two neighbours of degree at least two. 
Hence, Theorem \ref{ntnt} implies that
$$ \Delta \leq  \max\{\chi'_s(T_v) : v\in V(T) \} \leq \Delta +1.$$
Therefore, we conclude that $\Delta \leq  \chi'_s(T) \leq \Delta +1.$
Moreover, $\chi'_s(T) = \Delta +1$ if and only if $\chi'_s(T_v)  = \Delta +1$, for some $v\in  V(T)$.
By Theorem \ref{ntnt}, we can easily see that $\chi'_s(T_v)  = \Delta +1$, for some $v\in V(T)$  if and only if there are two vertices of degree $\Delta $   of distance two in $T$.
}
\end{proof}

\setlength{\baselineskip}{0.73\baselineskip}

\end{document}